\DeclareMathOperator*{\spann}{span}\DeclareMathOperator{\supp}{supp}
\DeclareMathOperator{\vol}{vol}
\newcommand{\R}{\mathbb{R}}
\newcommand{\nm}[1]{\|#1\|}
\newcommand{\ip}[2]{\langle#1,#2\rangle}
\DeclareMathOperator{\pfp}{PFP}
\begin{document}

\title*{Probabilistic frames: An overview}
\author{Martin Ehler and Kasso A.~Okoudjou}
\institute{Martin Ehler \at Helmholtz Zentrum M\"unchen, Institute of Biomathematics and Biometry, Ingolst\"adter Landstr.~1, 85764 Neuherberg, Germany, \email{martin.ehler@helmholtz-muenchen.de}
\at Second affiliation: National Institutes of Health, Eunice Kennedy Shriver National Institute of Child Health and Human Development, Section on Medical Biophysics, 9 Memorial Drive, Bethesda, MD 20892, \email{ehlermar@mail.nih.gov}
\and Kasso A.~Okoudjou \at University of Maryland, Department of Mathematics, Norbert Wiener Center, College Park, MD 20742, \email{kasso@math.umd.edu}}

\maketitle

\abstract{Finite frames can be viewed as mass points distributed in $N$-dimensional Euclidean space. As such they form a subclass of a larger and rich class of probability measures that we call probabilistic frames. We derive the basic properties of probabilistic frames, and we characterize one of their  subclasses in terms of minimizers of some appropriate potential function. In addition, we survey a range of areas where probabilistic frames, albeit, under different names, appear. These areas include directional statistics, the geometry of convex bodies, and the theory of t-designs.}


\section{Introduction}\label{intro} 
Finite frames in $\R^N$ are  spanning sets that allow the analysis and synthesis of vectors in a way similar to basis decompositions. However, frames are redundant systems and as such the reconstruction formula they provide is not unique. This redundancy plays a key role in many applications of frames which appear now in a range of areas that include, but is not limited to, signal processing, quantum computing, coding theory, and sparse representations, cf.~\cite{Christensen:2003aa, koche1, koche2} for an overview.

By viewing the frame vectors as discrete mass distributions on $\R^N$, one can extend frame concepts to probability measures. This point of view was developed in \cite{me11} under the name of probabilistic frames and was further expanded  in \cite{meko11}. The goal of this chapter is to summarize the main properties of probabilistic frames and to bring forth their relationship to other areas of mathematics. 

The richness of the set of probability measures together with the availability of analytic and algebraic tools, make it straightforward to construct many examples of probabilistic frames. For instance, by convolving probability measures, we have been able to generate new probabilistic frames from existing ones.  In addition, the probabilistic framework considered in this chapter, allows us to introduce a new distance on frames, namely the Wasserstein distance \cite{vill09}, also known as the Earth Mover's distance \cite{Levina:2001fk}. Unlike standard frame distances  in the literature such as the $\ell_2$-distance, the Wasserstein metric enables us to define a meaningful distance between two frames of different cardinalities.

As we shall see later in Section \ref{relfields}, probabilistic frames are also tightly related to various notions that appeared in areas such as the theory of $t$-designs \cite{Del77}, Positive Operator-Valued Measures (POVM) encountered in quantum computing \cite{albini09, ebdavies, dl70}, and isometric measures used in the study of convex bodies \cite{gimi00, John:1948uq, Milman:1987aa}. In particular, in 1948, F.~John \cite{John:1948uq} gave a characterization of what is known today as unit norm tight frames in terms of an ellipsoid of maximal volume, called John's ellipsoid. The latter and other ellipsoids in some extremal positions, are supports of probability measures that turn out to be probabilistic frames. The connections between frames and convex bodies could offer new insight to the construction of frames, on which we plan to elaborate elsewhere.

Finally, it is worth mentioning the connections between probabilistic frames and statistics. For instance, in directional statistics probabilistic tight frames can be used to measure inconsistencies of certain statistical tests. Moreover, in the setting of $M$-estimators as discussed in \cite{Kent:1988kx,Tyler:1987fk,Tyler:1987uq}, finite tight frames can be derived from maximum likelihood estimators that are used for parameter estimation of probabilistic frames.

This chapter is organized as follows. In Section~\ref{section:characterization} we define probabilistic frames, prove some of their main properties, and give a few examples. In Section~\ref{pfpot} we introduce the notion of the probabilistic frame potential and characterize its minima in terms of tight probabilistic frames. In Section~\ref{relfields} we discuss the relationship between probabilistic frames and other areas such as the geometry of convex bodies, quantum computing, the theory of $t$-designs, directional statistics, and compressed sensing. 

\section{Probabilistic Frames}\label{section:characterization}
\subsection{Definition and basic properties}\label{def&prop}
Let $\mathcal{P}:=\mathcal{P}(\mathcal{B},\R^N)$ denote the collection of probability measures on $\R^N$ with respect to the Borel $\sigma$-algebra $\mathcal{B}$. Recall that the support of $\mu\in\mathcal{P}$ is
\begin{equation*}
\supp(\mu)=\big\{x\in  \R^N  : \mu(U_x)>0, \text{ for all open subsets $x\in U_x\subset  \R^N $}  \big\}.
\end{equation*}
We write $\mathcal{P}(K):=\mathcal{P}(\mathcal{B},K)$ for those probability measures in $\mathcal{P}$ whose support is contained in $K\subset \R^N$. The linear span of $\supp(\mu)$ in $\R^N$ is denoted by $E_{\mu}$.
\begin{definition}\label{defpf}
A Borel probability measure $\mu \in \mathcal{P}$  is a \emph{probabilistic frame} if there exists $0<A\leq B < \infty$ such that 
\begin{equation}\label{pfineq}
 A\|x\|^2 \leq \int_{\R^{N}} |\langle x,y\rangle |^2 d\mu (y) \leq B\|x\|^2,\quad\text{for all $x\in\R^N$.}
 \end{equation}
The constants $A$ and $B$ are called \emph{lower and upper probabilistic frame bounds}, respectively.
When $A=B,$ $\mu$ is called a \emph{tight probabilistic frame}.  If only the upper inequality holds, then we call $\mu$ a \emph{Bessel probability measure}. 
\end{definition}


This notion was introduced in \cite{me11} and was further developed in \cite{meko11}. We shall see later in Section \ref{pfoperator} that probabilistic frames provide reconstruction formulas similar to those known from finite frames. Moreover, tight probabilistic frames are present in many areas including convex bodies, mathematical physics, and statistics, cf.~Section \ref{relfields}. We begin by giving a complete  characterization of probabilistic frames, for which we first need some preliminary definitions.

Let 
\begin{equation}\label{p2space}
\mathcal{P}_{2}:= \mathcal{P}_{2}(\R^{N})=\bigg\{ \mu \in \mathcal{P}: M_{2}^{2}(\mu):=\int_{\R^{N}}\nm{x}^{2}d\mu(x) < \infty\bigg\}
\end{equation} 
be the (convex) set of all probability measure with finite second moments. There exists a natural metric on $\mathcal{P}_{2}$ called the $2$-\emph{Wasserstein metric} and given by

\begin{equation}\label{wmetric}
W_{2}^{2}(\mu, \nu):=\min\bigg\{\int_{\R^N \times \R^N}\nm{x-y}^{2}d\gamma(x, y), \gamma \in \Gamma(\mu, \nu)\bigg\},
\end{equation}
where $\Gamma(\mu, \nu)$ is the set of all Borel probability measures $\gamma$ on $\R^N \times \R^N$ whose marginals are $\mu$ and $\nu$, respectively, i.e., $\gamma(A\times \R^N)=\mu(A)$ and $\gamma(\R^N \times B) = \nu(B)$ for all Borel subset $A, B$ in $\R^N$. The Wasserstein distance represents the ``work'' that is needed to transfer the mass from $\mu$ into $\nu$, and each $\gamma\in\Gamma(\mu, \nu)$ is called a transport plan. We refer to \cite[Chapter 7]{ags05}, \cite[Chapter 6]{vill09} for more details on the Wasserstein spaces.

\begin{theorem}\label{thm1}
A Borel probability measure $\mu \in \mathcal{P}$  is a \emph{probabilistic frame} if and only if $\mu \in \mathcal{P}_{2}$ and $E_{\mu}=\R^N$. Moreover, if $\mu$ is a tight probabilistic frame, then the frame bound $A$ is given by $A=\tfrac{1}{N}M_{2}^{2}(\mu)=\tfrac{1}{N}\int_{\R^{N}}\|y\|^{2}d\mu(y).$
\end{theorem}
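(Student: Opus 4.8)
The plan is to prove the three implications and the formula separately. First, suppose $\mu$ is a probabilistic frame. Taking $x$ to run over an orthonormal basis $\{e_1,\dots,e_N\}$ of $\R^N$ and summing the frame inequality \eqref{pfineq} over this basis gives $A N \le \sum_{k=1}^N \int_{\R^N} |\langle e_k, y\rangle|^2 d\mu(y) = \int_{\R^N} \|y\|^2 d\mu(y) = M_2^2(\mu) \le BN$, so $\mu \in \mathcal{P}_2$. For $E_\mu = \R^N$: if not, there is a nonzero $x \perp E_\mu$, hence $\langle x, y\rangle = 0$ for $\mu$-a.e.\ $y$ (in fact for all $y \in \supp(\mu)$), which forces $\int_{\R^N} |\langle x, y\rangle|^2 d\mu(y) = 0$, contradicting the lower bound $A\|x\|^2 > 0$.

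Conversely, suppose $\mu \in \mathcal{P}_2$ and $E_\mu = \R^N$. The upper bound is immediate from Cauchy--Schwarz: $\int_{\R^N} |\langle x, y\rangle|^2 d\mu(y) \le \|x\|^2 \int_{\R^N}\|y\|^2 d\mu(y) = M_2^2(\mu)\|x\|^2$, so $B = M_2^2(\mu)$ works. For the lower bound, consider the quadratic form $Q(x) := \int_{\R^N} |\langle x, y\rangle|^2 d\mu(y)$; by the finite second moment it is a well-defined, continuous, nonnegative quadratic form on $\R^N$, represented by the (symmetric, positive semidefinite) matrix $S = \int_{\R^N} y y^\top d\mu(y)$. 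It suffices to show $S$ is positive definite, equivalently that $Q(x) = 0$ implies $x = 0$. But $Q(x) = 0$ means $\langle x, y\rangle = 0$ for $\mu$-a.e.\ $y$, hence for every $y \in \supp(\mu)$, hence $x \perp E_\mu = \R^N$, so $x = 0$. Then $A := \lambda_{\min}(S) > 0$ is a lower frame bound by the min--max (Rayleigh quotient) characterization of eigenvalues.

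For the tight case: if $A = B$, then $Q(x) = A\|x\|^2$ for all $x$, i.e.\ $S = A\,\mathrm{Id}$. Taking the trace, $\trace(S) = \int_{\R^N} \trace(y y^\top)\, d\mu(y) = \int_{\R^N} \|y\|^2 d\mu(y) = M_2^2(\mu)$, while $\trace(A\,\mathrm{Id}) = AN$; equating gives $A = \tfrac{1}{N} M_2^2(\mu)$.

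The only mild obstacle is the measure-theoretic point that $\int_{\R^N} |\langle x, y\rangle|^2 d\mu(y) = 0$ forces $\langle x, \cdot\rangle$ to vanish on all of $\supp(\mu)$ (not merely $\mu$-almost everywhere) and hence on $E_\mu$; this follows because the set $\{y : \langle x, y\rangle \ne 0\}$ is open, so if it met $\supp(\mu)$ it would have positive $\mu$-measure. Everything else is a routine translation between the frame inequality, the quadratic form $Q$, and its Gram-type matrix $S$, together with the spectral theorem for the symmetric matrix $S$.
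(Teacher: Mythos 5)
Your proposal is correct, and the forward implication (summing the frame inequality over an orthonormal basis, plus the orthogonal-complement contradiction) coincides with the paper's argument. Where you diverge is the lower bound in the converse: the paper defines $A:=\inf_{x\in S^{N-1}}\int_{\R^N}|\ip{x}{y}|^2d\mu(y)$, uses dominated convergence to get continuity of $x\mapsto\int|\ip{x}{y}|^2d\mu(y)$, invokes compactness of $S^{N-1}$ so the infimum is attained at some $x_0$, and then shows $A>0$ by the same open-set/support argument you isolate at the end (an open neighborhood of a point of $\supp(\mu)$ on which $|\ip{x_0}{\cdot}|^2>\varepsilon$ has positive measure). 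You instead represent the quadratic form by the second-moment matrix $S=\int_{\R^N}yy^\top d\mu(y)$ (which is exactly the probabilistic frame operator $S_\mu$ introduced later in the paper), prove its kernel is trivial via the same support argument, and take $A=\lambda_{\min}(S)$ by the Rayleigh-quotient characterization. Both are sound; your route is more linear-algebraic, dispenses with the compactness/continuity step, and essentially anticipates Proposition~\ref{propg-s}~b) (positive definiteness of $S_\mu$ characterizes probabilistic frames), whereas the paper's route works directly with the optimal bound as a minimum over the sphere -- which of course equals $\lambda_{\min}(S_\mu)$, so the two produce the same constant. For the tight case, your identity $S=A\,I_N$ followed by taking traces is the same computation the paper performs implicitly by summing over an orthonormal basis, so there is no substantive difference there.
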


\begin{proof}
Assume first that $\mu$ is a probabilistic frame, and let $\{\varphi_{i}\}_{i=1}^{N}$ be an orthonormal basis for $\R^N$. By letting $x=\varphi_i$ in \eqref{pfineq}, we have $A\leq \int_{\R^{N}}|\langle \varphi_{i},y\rangle|^{2}d\mu(y) \leq B$. Summing these inequalities over $i$ leads to $A\leq \tfrac{1}{N}\int_{\R^{N}}\|y\|^{2}d\mu(y) \leq B < \infty$, which proves that $\mu \in \mathcal{P}_{2}$. Note that the latter inequalities also prove the second part of the theorem. 

To prove $E_\mu=\R^N$, we assume that $E_{\mu}^{\bot} \neq \{0\}$ and choose $0\neq x \in E_{\mu}^{\bot}$. The left hand side of~\eqref{pfineq} then yields a contradiction.

For the reverse implication, let $M_2(\mu)<\infty$ and $E_\mu=\R^N$. The upper bound in~\eqref{pfineq} is obtained by a simple application of the Cauchy-Schwartz inequality with $B= \int_{\R^{N}}\|y\|^{2}\, d\mu(y)$. To obtain the lower frame bound, let
\begin{equation*}
A:=\inf_{x\in\R^N} \big(\frac{\int_{ \R^{N} } |\langle x,y\rangle |^2 d\mu (y)}{\|x\|^2} \big) = \inf_{x\in S^{N-1}} \big(\int_{ \R^{N} } |\langle x,y\rangle |^2 d\mu (y)\big).
\end{equation*}
Due to the dominated convergence theorem, the mapping $x\mapsto \int_{ \R^N } |\langle x,y\rangle |^2 d\mu (y)$ is continuous and the infimum is in fact a minimum since the unit sphere $S^{N-1}$ is compact. Let $x_0$ be in $S^{N-1}$ such that 
\begin{equation*}
A=\int_{ \R^N } |\langle x_0,y\rangle |^2 d\mu (y).
\end{equation*} 
We need to verify that $A>0$: Since $E_\mu=\R^N$,  there is $y_0\in\supp(\mu)$ such that $|\langle x_0,y_0\rangle|^2>0$. Therefore, there is $\varepsilon >0$ and an open subset $U_{y_0}\subset  \R^N $ satisfying $y_0\in U_{y_0}$ and $|\langle x,y\rangle|^2>\varepsilon$, for all $y\in U_{y_0}$. Since $\mu(U_{y_0})>0$, we obtain $A\geq \varepsilon \mu(U_{y_0})>0$, which concludes the proof of the first part of the proposition. 
\end{proof}

\begin{remark} A tight probabilistic frame $\mu$ with $M_{2}(\mu)=1$ will be referred to as \emph{unit norm tight probabilistic frame}. In this case the frame bound is $A=\tfrac{1}{N}$ which only depends on the dimension of the ambient space. In fact, any tight probabilistic frame $\mu$ whose support is contained in the unit sphere $S^{N-1}$ is a unit norm tight probabilistic frame.
\end{remark}

In the sequel, the Dirac measure supported at $\varphi\in\R^N$ is denoted by $\delta_\varphi$.
\begin{proposition}\label{frame2pf}
Let $\Phi=(\varphi_i)_{i=1}^M$ be a sequence of nonzero vectors in $\R^N$, and let $\{a_i\}_{i=1}^M$ be a sequence of positive numbers.

\noindent a) $\Phi$ is a frame with frame bounds $0<A\leq B < \infty$ if and only if $\mu_{\Phi}:=\tfrac{1}{M}\sum_{i=1}^{M}\delta_{\varphi_{i}}$  is  a probabilistic frame with bounds $A/M,$ and $B/M$.

\noindent b) Moreover, the following statements are equivalent:
\begin{itemize}

\item[(i)] $\;\;\Phi$ is a (tight) frame.


\item[(ii)] $\;\;\mu^\Phi:=\frac{1}{\sum_{i=1}^M \|\varphi_i\|^2}\sum_{i=1}^{M}\|\varphi_i\|^2\delta_{\frac{\varphi_{i}}{\|\varphi_i\|}}$ is a (tight) unit norm probabilistic frame.

\item[(iii)] $\;\;\frac{1}{\sum_{i=1}^M a_i^2}\sum_{i=1}^M a^2_i\delta_{\frac{\varphi_i}{a_i}}$ is a (tight) probabilistic frame.
\end{itemize} 
\end{proposition}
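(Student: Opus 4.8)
The plan is to reduce every assertion to an explicit evaluation of the quadratic form $x \mapsto \int_{\R^N} |\langle x,y\rangle|^2\, d\mu(y)$ for the relevant finitely supported measures, using the fact that integrating against a convex combination of Dirac masses is simply a weighted finite sum.

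For part a), the key identity is
\[
\int_{\R^N} |\langle x,y\rangle|^2\, d\mu_\Phi(y) = \frac1M \sum_{i=1}^M |\langle x,\varphi_i\rangle|^2, \qquad x\in\R^N.
\]
Multiplying the frame inequality $A\|x\|^2 \le \sum_i |\langle x,\varphi_i\rangle|^2 \le B\|x\|^2$ through by $1/M$ shows it is equivalent to the probabilistic frame inequality \eqref{pfineq} for $\mu_\Phi$ with constants $A/M \le B/M$; conversely \eqref{pfineq} for $\mu_\Phi$ unwinds to the frame inequality after multiplying by $M$. Consistency with Theorem \ref{thm1} is automatic here: $\mu_\Phi$ has finite support, hence lies in $\mathcal{P}_2$, and $E_{\mu_\Phi} = \spann\{\varphi_i : i=1,\dots,M\}$, which equals $\R^N$ exactly when $\Phi$ is a frame.

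For part b), I would prove (i) $\Leftrightarrow$ (iii) first, since (ii) is the special case $a_i = \nm{\varphi_i}$. For $\mu := \big(\sum_j a_j^2\big)^{-1}\sum_{i=1}^M a_i^2\, \delta_{\varphi_i/a_i}$ the same computation gives
\[
\int_{\R^N} |\langle x,y\rangle|^2\, d\mu(y) = \frac{1}{\sum_j a_j^2}\sum_{i=1}^M a_i^2\, \Big|\Big\langle x, \tfrac{\varphi_i}{a_i}\Big\rangle\Big|^2 = \frac{1}{\sum_j a_j^2}\sum_{i=1}^M |\langle x,\varphi_i\rangle|^2,
\]
the weights $a_i^2$ cancelling against the $a_i^{-2}$ pulled out of each inner product. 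Hence $\mu$ satisfies \eqref{pfineq} with bounds $\tilde A \le \tilde B$ iff $\Phi$ is a frame with bounds $(\sum_j a_j^2)\tilde A \le (\sum_j a_j^2)\tilde B$, and since both bounds are multiplied by the same positive constant, $\mu$ is tight iff $\Phi$ is tight. This establishes (i) $\Leftrightarrow$ (iii). For (i) $\Leftrightarrow$ (ii) I specialize to $a_i = \nm{\varphi_i}$, and then add two observations: $\supp(\mu^\Phi)\subset S^{N-1}$ because each $\varphi_i/\nm{\varphi_i}$ is a unit vector, and
\[
M_2^2(\mu^\Phi) = \frac{1}{\sum_j \nm{\varphi_j}^2}\sum_{i=1}^M \nm{\varphi_i}^2\, \Big\|\tfrac{\varphi_i}{\nm{\varphi_i}}\Big\|^2 = 1,
\]
so $\mu^\Phi$ is automatically a \emph{unit norm} probabilistic frame in the sense of the Remark.

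I do not anticipate any genuine difficulty: the entire statement is bookkeeping built on one elementary identity. The only points that require a moment's attention are keeping track of how the frame bounds rescale (by $M$ in (a), by $\sum_j a_j^2$ in (b)) and checking that the chosen normalizations are exactly the ones that make the weights disappear from the quadratic form — which is precisely why these particular measures, and not arbitrary reweightings, appear in the statement.
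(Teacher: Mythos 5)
Your proposal is correct and follows essentially the same route as the paper, which states the identity $\int_{\R^{N}}\ip{x}{y}^{2}d\mu_{\Phi}(y)=\tfrac{1}{M}\sum_{k=1}^{M}\ip{x}{\varphi_k}^{2}$ and leaves parts a) and b) to "direct calculations." You simply carry out those calculations explicitly, including the cancellation of the weights $a_i^2$ and the check that $M_2^2(\mu^\Phi)=1$, so there is nothing to add.
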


\begin{proof} Clearly, $\mu_\Phi$ is a probability measure, and its support is the set $\{\varphi_k\}_{k=1}^{M}$, such that,
\begin{equation*}
 \int_{\R^{N}}\ip{x}{y}^{2}d\mu_{\Phi}(y)= \tfrac{1}{M}\sum_{k=1}^{M}\ip{x}{\varphi_k}^{2}.
 \end{equation*}
 Part a) can be easily derived from the above equality, and direct calculations imply the remaining equivalences.
\end{proof} 

\begin{remark}
Though the frame bounds of $\mu_{\Phi}$ are smaller than those of $\Phi$, we observe that the ratios of the respective frame bounds remain the same. 
\end{remark}

\begin{example}\label{example1}
Let $dx$ denote the Lebesgue measure on $\R^N$ and assume that $f$ is a positive Lebesgue integrable function such that $\int_{\R^N}f(x) dx =1$. If $\int_{\R^{N}}\nm{x}^{2}f(x)dx < \infty$, then the measure $\mu$ defined by $d\mu(x)=f(x)dx$ is a (Borel) probability measure that is a probabilistic frame. 
Moreover, if $f(x_1,\ldots,x_N) = f(\pm x_1,\ldots,\pm x_N)$, for all combinations of $\pm$, then $\mu$ is a tight probabilistic frame, cf.~Proposition 3.13 in \cite{me11}. The latter is satisfied, for instance, if $f$ is radially symmetric, i.e., there is a function $g$ such that $f(x)=g(\|x\|)$. 
\end{example}

Viewing frames in the probabilistic setting that we have been developing has several advantages. For instance, we can use measure theoretical tools to generate new probabilistic frames from old ones and, in fact, under some mild conditions, the convolution of probability measures leads to probabilistic frames. Recall that the convolution of $\mu,\nu\in\mathcal{P}$ is the probability measure given by $\mu \ast \nu(A)=\int_{\R^{N}}\mu(A-x)d\nu(x)$ for $A \in \mathcal{B}$. Before we state the result on convolution of probabilistic frames, we need a technical lemma that is related to the support of a probability measure that we consider later. The result is an analog of the fact that adding finitely many vectors to a frame does not change the frame nature, but affects only its bounds. In the case of probabilistic frames, the adjunction of a single point (or finitely many points) to its support does not destroy the frame property, but just changes the frame bounds:

\begin{lemma}\label{adjunctionsupp}
Let $\mu$ be a Bessel probability measure with bound $B>0$. Given $\epsilon\in (0, 1)$ set $\mu_\epsilon= (1-\epsilon)\mu + \epsilon \delta_{0}$. Then $\mu_\epsilon$ is a Bessel measure with bound $B_\epsilon=(1-\epsilon)B$. If in addition $\mu$ is a probabilistic frame with bounds $0<A\leq B < \infty$, then $\mu_\epsilon$ is also a probabilistic frame with bounds $(1-\epsilon)A$ and $(1-\epsilon)B$. 

In particular, if $\mu$ is a tight probabilistic frame with bound $A$, then so is  $\mu_\epsilon$ with bound $(1-\epsilon)A$ 
\end{lemma}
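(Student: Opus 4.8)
The plan is to compute the quadratic form $\int_{\R^N} |\langle x, y\rangle|^2\, d\mu_\epsilon(y)$ directly from the definition of $\mu_\epsilon$ and exploit the linearity of the integral in the measure. Since $\mu_\epsilon = (1-\epsilon)\mu + \epsilon\delta_0$, for every $x \in \R^N$ we have
\begin{equation*}
\int_{\R^N} |\langle x, y\rangle|^2\, d\mu_\epsilon(y) = (1-\epsilon)\int_{\R^N} |\langle x, y\rangle|^2\, d\mu(y) + \epsilon\, |\langle x, 0\rangle|^2 = (1-\epsilon)\int_{\R^N} |\langle x, y\rangle|^2\, d\mu(y),
\end{equation*}
because the Dirac mass at the origin contributes $|\langle x,0\rangle|^2 = 0$. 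This is the whole crux: the added point is the origin, which is invisible to the analysis operator.

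From this identity everything follows immediately. If $\mu$ is Bessel with bound $B$, then the right-hand side is at most $(1-\epsilon)B\|x\|^2$, so $\mu_\epsilon$ is Bessel with bound $(1-\epsilon)B$. If $\mu$ additionally satisfies the lower bound $A\|x\|^2 \le \int |\langle x,y\rangle|^2\, d\mu(y)$, then the right-hand side is at least $(1-\epsilon)A\|x\|^2$, so $\mu_\epsilon$ is a probabilistic frame with bounds $(1-\epsilon)A$ and $(1-\epsilon)B$; note $0 < (1-\epsilon)A \le (1-\epsilon)B < \infty$ since $\epsilon \in (0,1)$. The tight case is the special case $A = B$, giving tightness of $\mu_\epsilon$ with bound $(1-\epsilon)A$. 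One may also remark that $\mu_\epsilon \in \mathcal{P}$ is genuinely a probability measure as a convex combination of two such, so there is no issue invoking Definition~\ref{defpf}.

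Honestly, there is no real obstacle here; the lemma is a one-line computation dressed up, and the only thing to be slightly careful about is confirming that the convex combination $\mu_\epsilon$ indeed lies in $\mathcal{P}$ and, in the frame case, that membership in $\mathcal{P}_2$ is preserved — but the latter is automatic since $M_2^2(\mu_\epsilon) = (1-\epsilon)M_2^2(\mu) < \infty$, and in any case it is subsumed by the explicit bound computation above together with Theorem~\ref{thm1}. I would therefore present the proof as: state the key identity for the quadratic form, then read off each of the three assertions in turn.
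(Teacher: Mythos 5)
Your proof is correct and follows essentially the same route as the paper's: both rest on the identity $\int_{\R^N}|\langle x,y\rangle|^2\,d\mu_\epsilon(y)=(1-\epsilon)\int_{\R^N}|\langle x,y\rangle|^2\,d\mu(y)$, since the Dirac mass at the origin contributes nothing, after noting that $\mu_\epsilon$ is a probability measure as a convex combination. Your additional remarks (explicit case-by-case reading of the bounds, preservation of $\mathcal{P}_2$) are harmless elaborations of what the paper leaves implicit.
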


\begin{proof}
$\mu_\epsilon$ is clearly a probability measure since it is a convex combination of probability measures. The proof of the lemma follows from the following equations
\begin{align*}
\int_{\R^{N}}|\ip{x}{y}|^{2}d\mu_{\epsilon}(y) & = (1-\epsilon)\int_{\R^{N}}|\ip{x}{y}|^{2}d\mu(y) + \epsilon \int_{\R^{N}}|\ip{x}{y}|^{2}d\delta_{0}(y)\\
&= (1-\epsilon)\int_{\R^{N}}|\ip{x}{y}|^{2}d\mu(y).
\end{align*}
\end{proof}

We are now ready to understand the action of convolution on probabilistic frames. 

\begin{theorem}\label{conv}
Let $\mu\in \mathcal{P}_{2}$ be a probabilistic frame and let $\nu \in \mathcal{P}_{2}$. If $\supp(\mu)$ contains at least $N+1$ distinct vectors, then $\mu\ast\nu$ is a probabilistic frame.
\end{theorem}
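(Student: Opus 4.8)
The plan is to apply Theorem~\ref{thm1}, which reduces the claim to two assertions: $\mu\ast\nu\in\mathcal{P}_2$, and $E_{\mu\ast\nu}=\R^N$. The first is routine: by Tonelli's theorem $\int_{\R^N}g\,d(\mu\ast\nu)=\int_{\R^N}\int_{\R^N}g(y+z)\,d\mu(y)\,d\nu(z)$ for every nonnegative measurable $g$, so taking $g(x)=\nm{x}^2$ and using $\nm{y+z}^2\le 2\nm{y}^2+2\nm{z}^2$ gives $M_2^2(\mu\ast\nu)\le 2M_2^2(\mu)+2M_2^2(\nu)<\infty$, hence $\mu\ast\nu\in\mathcal{P}_2$.

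For the spanning property I would first record that $\supp(\mu)+\supp(\nu)\subseteq\supp(\mu\ast\nu)$: given $y_0\in\supp(\mu)$, $z_0\in\supp(\nu)$ and an open neighborhood $U$ of $y_0+z_0$, choose open $V\ni y_0$, $W\ni z_0$ with $V+W\subseteq U$; since $V+W-z\supseteq V$ for each $z\in W$,
\[
(\mu\ast\nu)(U)\ \ge\ \int_{\R^N}\mu(V+W-z)\,d\nu(z)\ \ge\ \mu(V)\,\nu(W)\ >\ 0 .
\]
Fixing any $z_0\in\supp(\nu)$ (nonempty), it then suffices to show that $\supp(\mu)+z_0$ spans $\R^N$. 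Since $\mu$ is a probabilistic frame, I can choose $\varphi_1,\dots,\varphi_N\in\supp(\mu)$ that form a basis of $\R^N$, and then---using the hypothesis that $\supp(\mu)$ has at least $N+1$ elements---a further point $\varphi_{N+1}\in\supp(\mu)$ distinct from these. The vectors $\varphi_i+z_0$ ($i=1,\dots,N+1$) all lie in $\supp(\mu)+z_0$, and I would try to show they span $\R^N$: if they did not, there would be $x\neq 0$ with $\ip{x}{\varphi_i}=-\ip{x}{z_0}=:-t$ for all $i$; writing $\varphi_{N+1}=\sum_{j=1}^N\lambda_j\varphi_j$ forces $t\bigl(\sum_j\lambda_j-1\bigr)=0$, and $t=0$ is impossible (it would place the basis $\varphi_1,\dots,\varphi_N$ inside the hyperplane $x^\perp$). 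Hence $\sum_j\lambda_j=1$, i.e. $\varphi_{N+1}$ would lie in the affine hull of $\varphi_1,\dots,\varphi_N$.

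Ruling out this last case is the main obstacle, and it is the point where one must be careful: mere distinctness of the $N+1$ points does not suffice---for instance $\mu=\tfrac{1}{3}\bigl(\delta_{(1,0)}+\delta_{(1,1)}+\delta_{(1,2)}\bigr)$ is a probabilistic frame in $\R^2$ with three support points, yet $\mu\ast\delta_{(-1,0)}=\tfrac{1}{3}\bigl(\delta_{(0,0)}+\delta_{(0,1)}+\delta_{(0,2)}\bigr)$ is not. The argument closes exactly when $\supp(\mu)$ contains $N+1$ \emph{affinely} independent vectors, equivalently when $\supp(\mu)$ is not contained in any affine hyperplane: then $\varphi_{N+1}$ can be taken outside the affine hull of $\varphi_1,\dots,\varphi_N$, so $\sum_j\lambda_j\neq 1$, no such $x$ exists, $\supp(\mu)+z_0$ spans $\R^N$, and Theorem~\ref{thm1} yields that $\mu\ast\nu$ is a probabilistic frame. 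So in carrying this out I would read (or state) the hypothesis in this affinely‑independent form, which is the substantive content behind ``$\supp(\mu)$ contains at least $N+1$ distinct vectors.''
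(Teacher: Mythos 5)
Your proof is sound, and your diagnosis is the important point: the step you isolate is precisely where the paper's own argument breaks down. After reducing to Theorem~\ref{thm1} (your moment bound and your proof that $\supp(\mu)+\supp(\nu)\subseteq\supp(\mu\ast\nu)$ match the paper, except that the paper merely asserts the support inclusion and you prove it), the paper picks $N+1$ distinct vectors $\{\varphi_j\}_{j=1}^{N+1}\subset\supp(\mu)$ spanning $\R^N$ and claims that ``due to their linear dependency'' every $x\in\R^N$ can be written as $x=\sum_{j=1}^{N+1}c_j\varphi_j$ with $\sum_{j=1}^{N+1}c_j=0$, so that $x=\sum_j c_j(\varphi_j+y)$ for any $y\in\supp(\nu)$. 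Since such a spanning set has a one-dimensional space of linear dependencies, spanned by some $d=(d_j)$, this representation with zero coefficient sum exists for all $x$ exactly when $\sum_j d_j\neq 0$, i.e.\ exactly when the $\varphi_j$ are affinely independent --- which is the condition you arrive at from the other direction. Mere cardinality of the support does not guarantee it, and your counterexample $\mu=\tfrac13(\delta_{(1,0)}+\delta_{(1,1)}+\delta_{(1,2)})$, $\nu=\delta_{(-1,0)}$ satisfies every hypothesis of the theorem as written while $\mu\ast\nu$ fails to be a probabilistic frame; the same happens whenever $\supp(\mu)$ lies in an affine hyperplane not through the origin and $\nu$ is a point mass at the negative of a point of that hyperplane. (The paper's remark following the theorem acknowledges this translation obstruction only when $\supp(\mu)$ is a basis, overlooking that it persists for arbitrarily many support points on an affine hyperplane.) So the statement needs the hypothesis in the form you give --- $\supp(\mu)$ contains $N+1$ affinely independent vectors, equivalently is not contained in any affine hyperplane --- and under that hypothesis your argument (translated points $\varphi_i+z_0$ span, via either your orthogonality argument or the paper's zero-sum representation, which is now legitimate) completes the proof correctly.
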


\begin{proof}
We shall use Theorem~\ref{thm1}:
\begin{align*}
M_{2}^{2}(\mu \ast \nu)&=\int_{\R^{N}}\|y\|^{2}d\mu \ast \nu(y)\\
&=\iint_{\R^{N}\times \R^{N}}\|x+y\|^{2}d\mu(x)d\nu(y)\\
& \leq M_{2}^{2}(\mu)+M_{2}^{2}(\nu) + 2M_{2}(\mu)M_{2}(\nu)\\
&=(M_{2}(\mu)+M_{2}(\nu))^{2} < \infty.
\end{align*}
Thus, $\mu\ast\nu\in\mathcal{P}_2$, and it only remains to verify that the support of $\mu\ast\nu$ spans $\R^N$, cf.~Theorem \ref{thm1}. Since $\supp(\mu)$ must span $\R^N$, there are $\{\varphi_j\}_{j=1}^{N+1}\subset \supp(\mu)$ that form a frame for $\R^N$. Due to their linear dependency, for each $x\in \R^N$, we can find $\{c_{j}\}_{j=1}^{N+1}\subset \R$ such that $x=\sum_{j=1}^{N+1} c_{j}\varphi_{j}$ with $\sum_{j=1}^{N+1}c_{j}=0$. For $y\in\supp(\nu)$, we then obtain
\begin{equation*}
x=x+0y=\sum_{j=1}^{N+1}c_{j}u_{j} +\sum_{j=1}^{N+1}c_{j}y=\sum_{j=1}^{N+1}c_{j}(u_{j} + y) \in \spann( \supp(\mu)+\supp(\nu)).
\end{equation*}
Thus, $\supp(\mu)\subset \spann(\supp(\mu)+\supp(\nu))$. Since $\supp(\mu)+\supp(\nu)\subset\supp(\mu\ast\nu)$, we can conclude the proof.
\end{proof}

\begin{remark}
By Lemma~\ref{adjunctionsupp} we can assume without loss of generality that $0 \in \supp(\nu)$. In this case, if $\mu$ is a probabilistic frame such that $\supp(\mu)$ does not contain $N+1$ distinct vectors, then $\mu \ast \nu$ is still a probabilistic frame. Indeed, $0\in \supp(\nu)$,  and $E_\mu=\R^N$ together with the fact that $\supp(\mu)+\supp(\nu)\subset \supp(\mu\ast\nu)$ imply  that $\supp(\mu\ast\nu)$ also spans $\R^N$. 

Finally, if $\mu$ is a probabilistic frame such that $\supp(\mu)$ does not contain $N+1$ distinct vectors, then $\supp(\mu)=\{\varphi_j\}_{j=1}^N$ forms a basis for $\R^N$. In this case, $\mu \ast \nu$ is not a probabilistic frame if $\nu = \delta_{-x}$, where $x$ is an affine linear combination of $\{\varphi_j\}_{j=1}^N$. Indeed, $x=\sum_{j=1}^N c_j\varphi_j$ with $\sum_{j=1}^N c_j = 1$ implies $\sum_{j=1}^N c_j (\varphi_j - x)=0$ although not all $c_j$ can be zero. Therefore, $\supp(\mu\ast\nu)=\{\varphi_j-x\}_{j=1}^N$ is linearly dependent and, hence, cannot span $\R^N$.
\end{remark}

\begin{proposition}\label{tpfconv}
Let $\mu$ and $\nu$ be tight probabilistic frames. If $\nu$ has zero mean, i.e., $\int_{\R^{N}}y d\nu(y)=0$, then $\mu \ast \nu$ is also a tight probabilistic frame. 
\end{proposition}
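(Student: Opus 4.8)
The plan is to compute the quadratic form $x \mapsto \int_{\R^N} |\langle x,z\rangle|^2\, d(\mu\ast\nu)(z)$ explicitly and show it equals $(A_\mu+A_\nu)\|x\|^2$, where $A_\mu, A_\nu$ are the tight frame bounds of $\mu,\nu$; by Definition~\ref{defpf} this is exactly the assertion that $\mu\ast\nu$ is a tight probabilistic frame.

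First I would collect the ingredients. Since $\mu$ and $\nu$ are tight probabilistic frames, Theorem~\ref{thm1} gives $\mu,\nu\in\mathcal{P}_2$ and $\int_{\R^N}\langle x,u\rangle^2\,d\mu(u)=A_\mu\|x\|^2$, $\int_{\R^N}\langle x,v\rangle^2\,d\nu(v)=A_\nu\|x\|^2$ for every $x$, with $A_\mu=\tfrac1N M_2^2(\mu)$ and $A_\nu=\tfrac1N M_2^2(\nu)$. The estimate in the proof of Theorem~\ref{conv} shows $M_2^2(\mu\ast\nu)\le (M_2(\mu)+M_2(\nu))^2<\infty$, so $\mu\ast\nu\in\mathcal{P}_2$ and all integrals below are finite, which licenses Fubini's theorem. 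Then, using $\int_{\R^N} g\,d(\mu\ast\nu)=\iint_{\R^N\times\R^N} g(u+v)\,d\mu(u)\,d\nu(v)$ with $g(z)=\langle x,z\rangle^2$ and expanding $\langle x,u+v\rangle^2=\langle x,u\rangle^2+2\langle x,u\rangle\langle x,v\rangle+\langle x,v\rangle^2$, the double integral splits into three pieces: the first and third give $A_\mu\|x\|^2$ and $A_\nu\|x\|^2$ (because $\nu$, resp.\ $\mu$, is a probability measure), while the cross term factors as $2\big(\int\langle x,u\rangle\,d\mu(u)\big)\big(\int\langle x,v\rangle\,d\nu(v)\big)$ and vanishes since $\int\langle x,v\rangle\,d\nu(v)=\langle x,\int v\,d\nu(v)\rangle=0$ by the zero-mean hypothesis. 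Hence $\int_{\R^N}|\langle x,z\rangle|^2\,d(\mu\ast\nu)(z)=(A_\mu+A_\nu)\|x\|^2$, and since $A_\mu+A_\nu\ge A_\mu>0$ this shows $\mu\ast\nu$ is a tight probabilistic frame with bound $A_\mu+A_\nu=\tfrac1N\big(M_2^2(\mu)+M_2^2(\nu)\big)$.

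The argument is essentially a one-line expansion, so there is no serious obstacle; the only care needed is in the bookkeeping that makes Fubini and the convolution identity applicable, and in pinpointing where the hypotheses are used. The zero-mean assumption on $\nu$ is precisely what kills the cross term — without it one picks up an extra term $2\langle x, m_\nu\rangle\big(\int\langle x,u\rangle\,d\mu(u)\big)$, with $m_\nu$ the mean of $\nu$, which is not proportional to $\|x\|^2$ in general — and tightness of $\mu$ (alone) is what guarantees the resulting lower bound is strictly positive, so that $\mu\ast\nu$ is a genuine frame rather than merely Bessel.
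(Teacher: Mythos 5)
Your proposal is correct and follows essentially the same route as the paper: apply the convolution identity, expand $|\langle x,u+v\rangle|^2$, use tightness of $\mu$ and $\nu$ for the square terms, and kill the cross term with the zero-mean hypothesis, yielding the bound $A_\mu+A_\nu$. The extra care you take in justifying Fubini via $M_2^2(\mu\ast\nu)<\infty$ and in noting positivity of the resulting bound is a welcome but minor refinement of the paper's argument.
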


\begin{proof}
Let $A_\mu$ and $A_\nu$ denote the frame bounds of $\mu$ and $\nu$, respectively. 
\begin{align*}
\int_{\R^N}|\ip{x}{y}|^{2}d\mu \ast \nu (y) &= \int_{\R^N}\int_{\R^{N}} |\ip{x}{y+z}|^{2}d\mu(y)\, d\nu(z)\\
& =\int_{\R^N}\int_{\R^{N}}  |\ip{x}{y}|^{2}d\mu(y)d\nu(z) + \int_{\R^N}\int_{\R^{N}}  |\ip{x}{z}|^{2}d\mu(y)d\nu(z)\\
& \qquad  +2\int_{\R^N}\int_{\R^{N}} \langle x,y\rangle \langle x,z\rangle d\mu(y)d\nu(z)\\
& =  A_{\mu}\|x\|^{2}+ A_{\nu} \|x\|^{2} + 2 \langle \int_{\R^{N}} \langle x,y\rangle x d\mu(y) , \int_{\R^{N}} z d\nu(z)\rangle\\
& = (A_\mu + A_\nu)\|x\|^2,
\end{align*}
where the latter equality is due to $\int_{\R^{N}} z d\nu(z)=0$.
\end{proof}

\begin{example}\label{example:tight etc}
Let $\{\varphi_i\}_{i=1}^M\subset \R^N$ be a tight frame, and let $\nu$ be a probability measure with $d\nu(x)=g(\|x\|)dx$ for some function $g$. We have already mentioned in Example \ref{example1} that $\nu$ is a tight probabilistic frame and Proposition \ref{tpfconv} then implies that $\big(\frac{1}{M}\sum_{i=1}^M\delta_{-\varphi_i}\big) \ast \nu=\frac{1}{M}\sum_{i=1}^M f(x-\varphi_i)dx$ is a tight probabilistic frame, see Figure \ref{figure:convolution} for a visualization.
\end{example}

\begin{figure}
\centering
\subfigure[The orthonormal basis is convolved with a gaussian]{
\includegraphics[width=.3\textwidth]{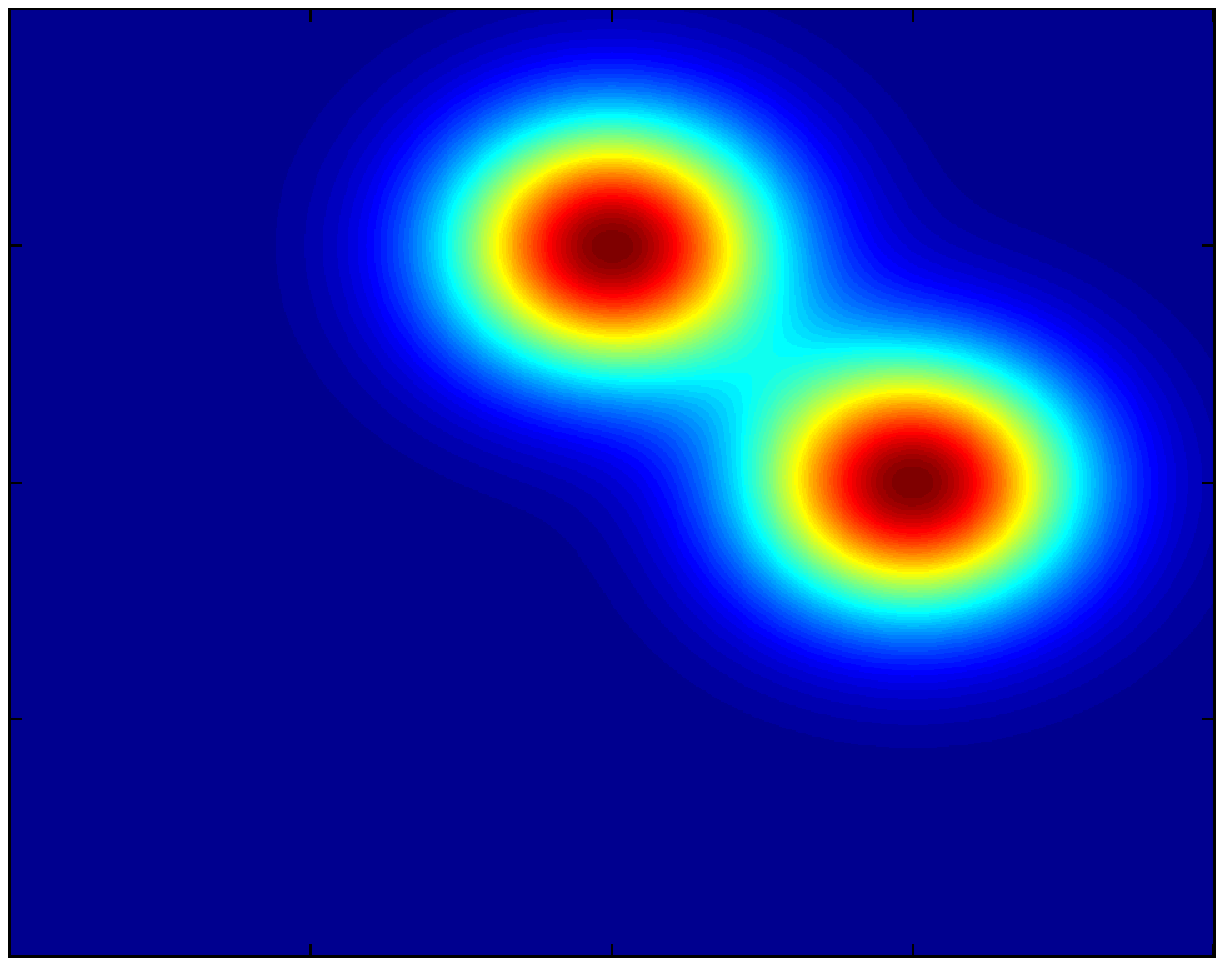}\;\;\;\;\includegraphics[width=.3\textwidth]{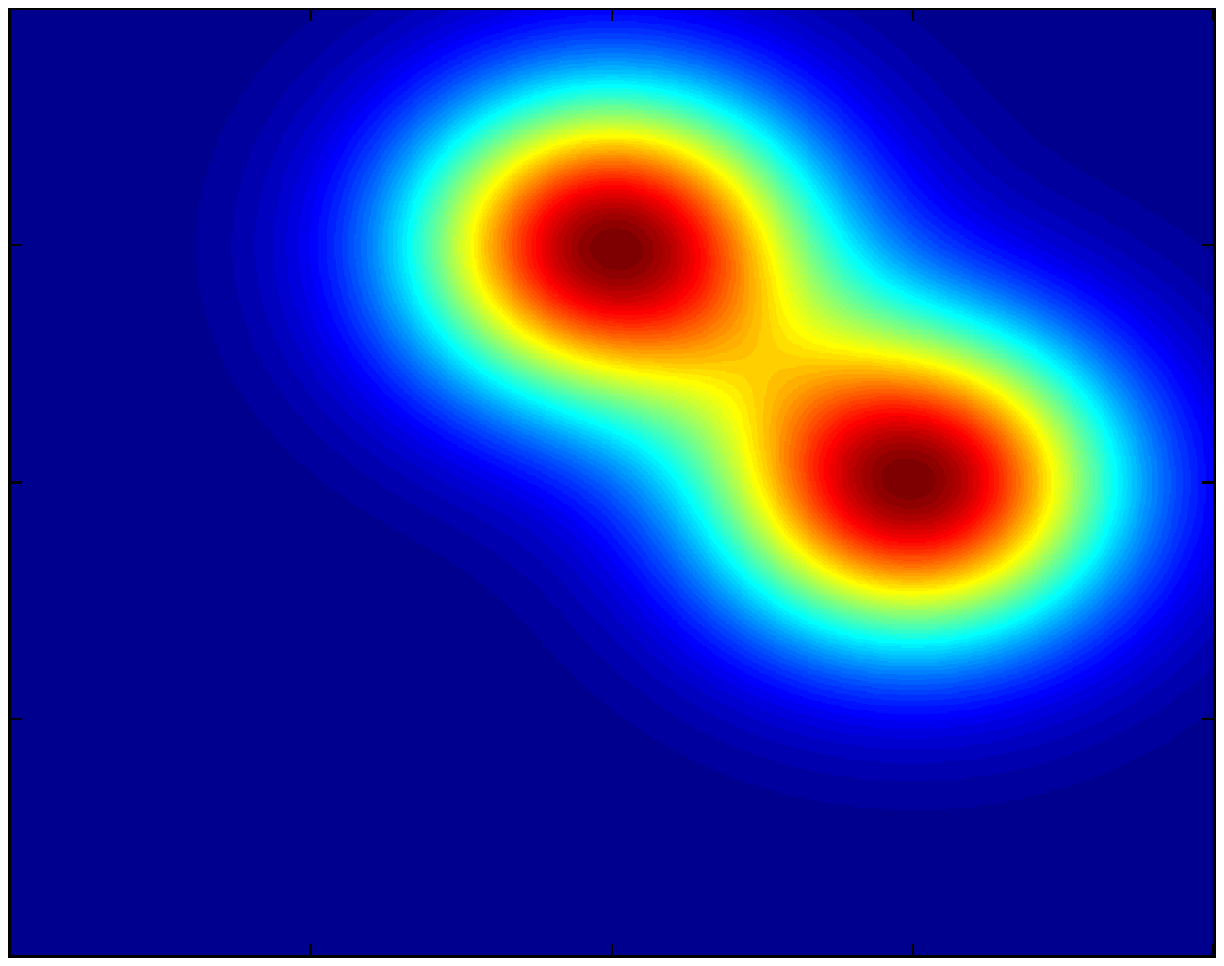}\;\;\;\;\includegraphics[width=.3\textwidth]{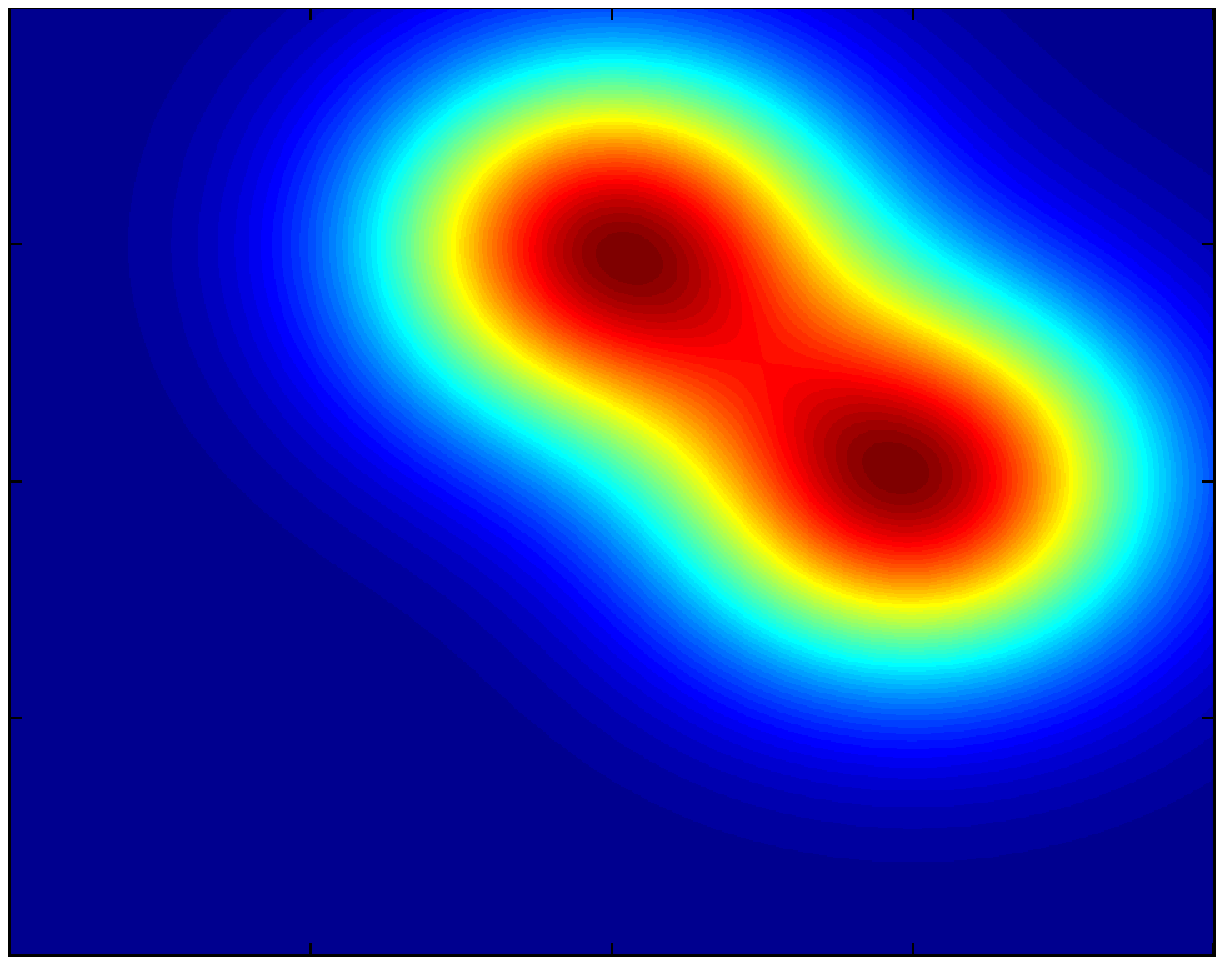}}

\subfigure[The mercedes benz convolved with a gaussian]{
\includegraphics[width=.3\textwidth]{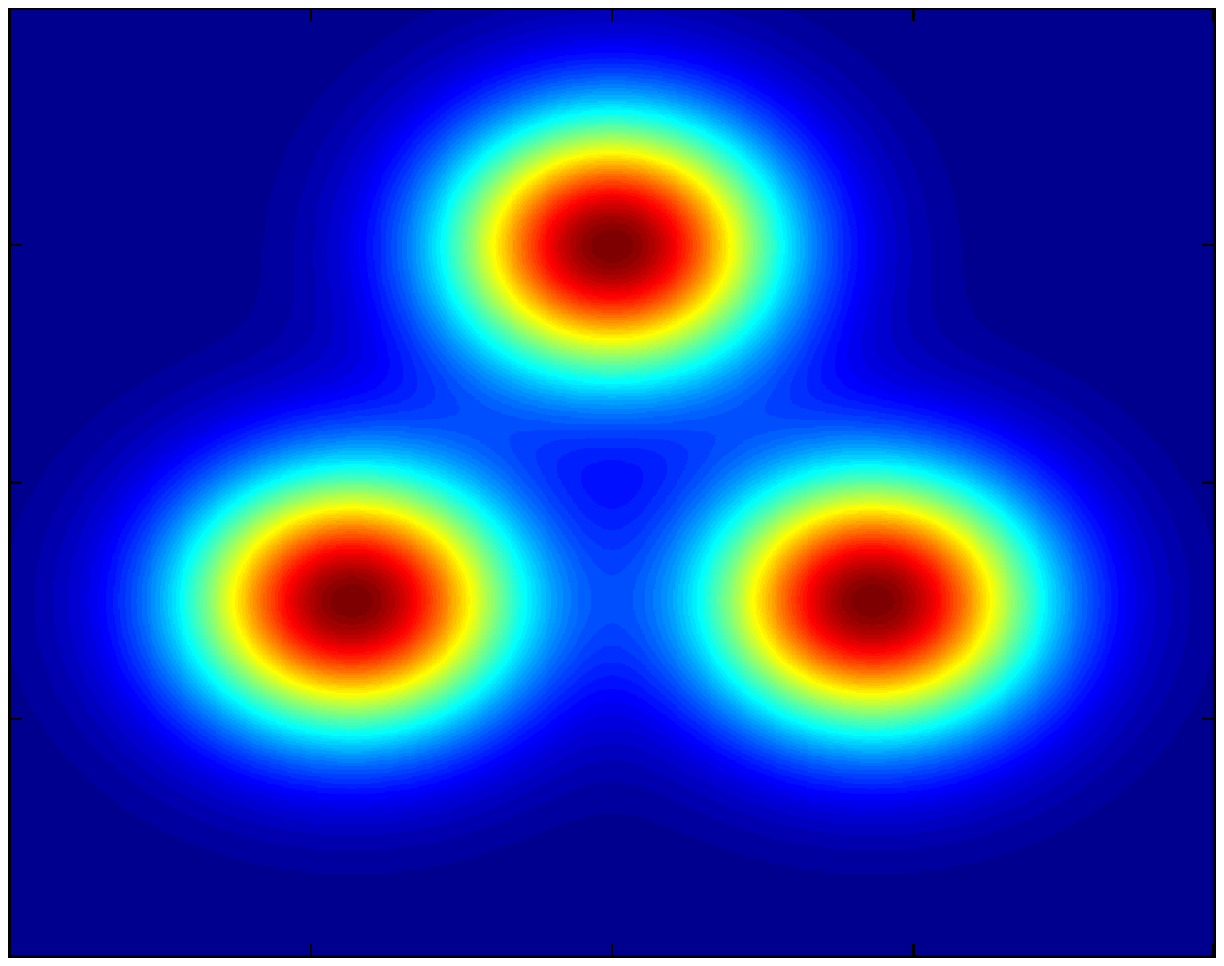}\;\;\;\;\includegraphics[width=.3\textwidth]{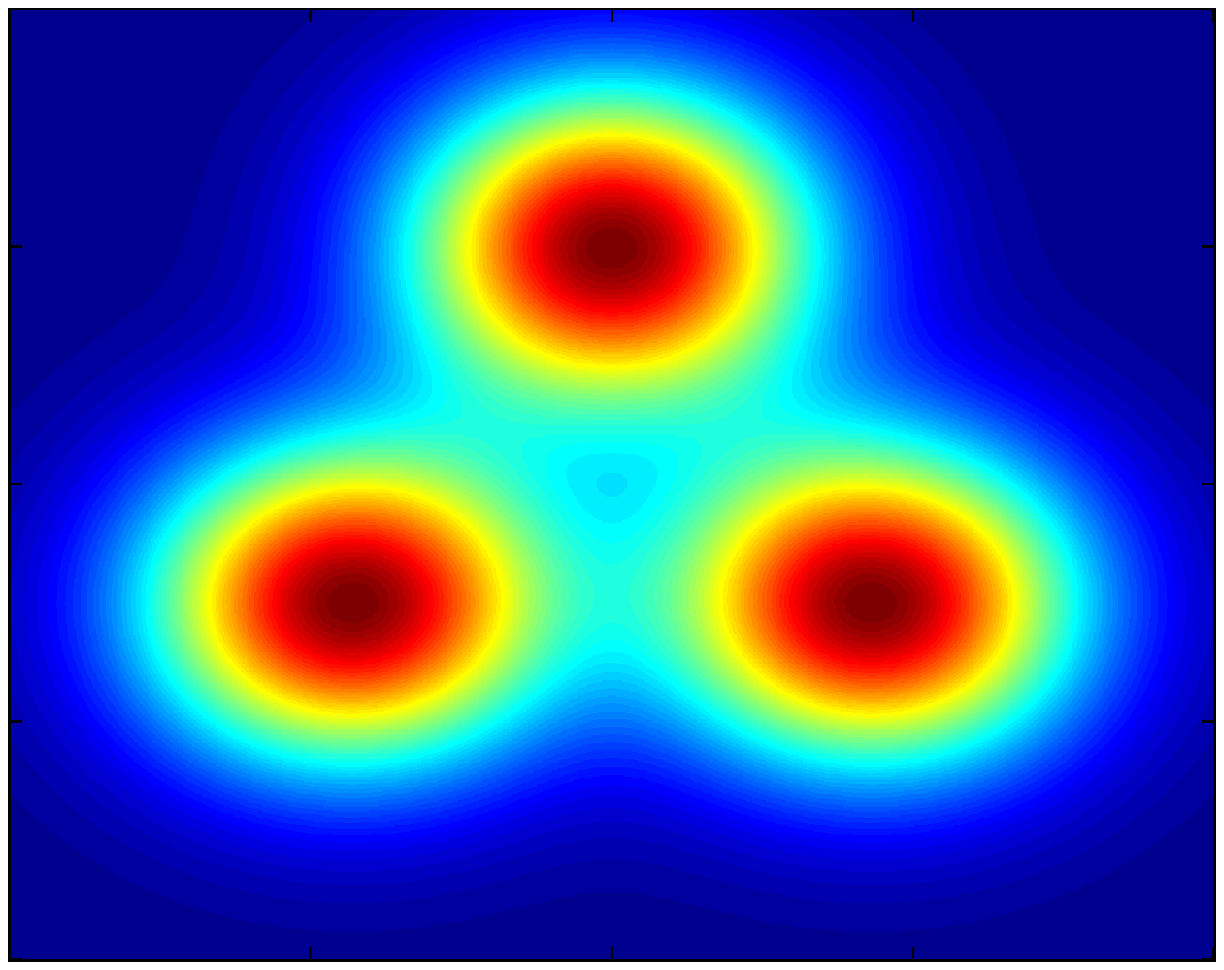}\;\;\;\;\includegraphics[width=.3\textwidth]{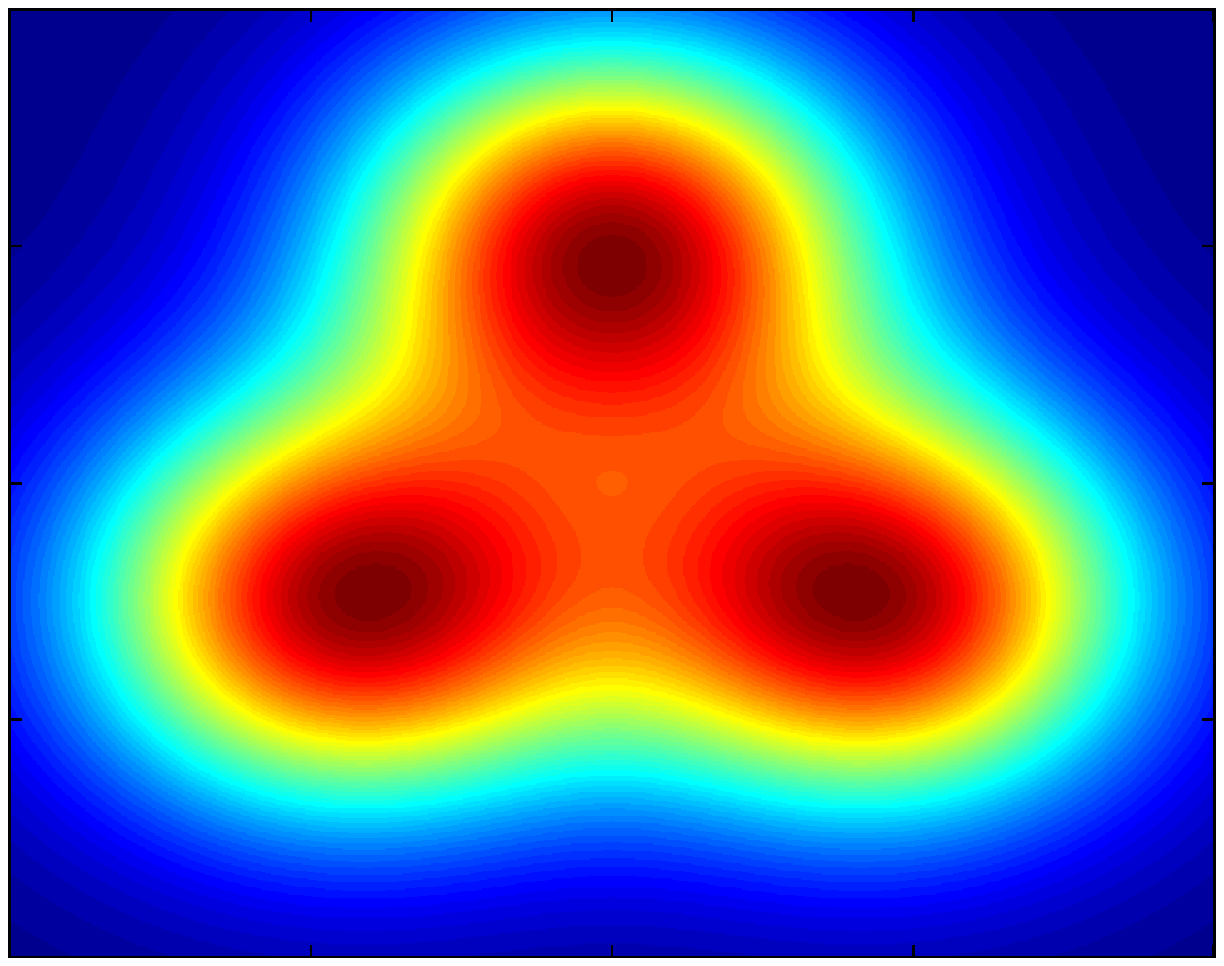}}
\caption{Heatmaps for the associated probabilistic tight frame, where  $\{\varphi_i\}_{i=1}^n\subset \R^2$ is convolved with a gaussian of increased variance (from left to right). The origin is at the center and the axes run from $-2$ to $2$. Each colormap separately scales from zero to the respective density's maximum.}\label{figure:convolution}
\end{figure}

\begin{proposition}\label{minw2}
Let $\mu$ and $\nu$ be two probabilistic frames on $\R^{N_1}$ and $\R^{N_2}$ with lower and upper frame bounds $A_\mu, A_\nu$ and $B_\mu, B_\nu$, respectively, such that at least one of them has zero mean. Then the product measure $\gamma=\mu \otimes \nu$ is a probabilistic frame for $\R^{N_1} \times \R^{N_2}$ with lower and upper frame bounds $\min(A_\mu,A_\nu)$ and $\max(B_\mu,B_\nu)$, respectively.

If, in addition, $\mu$ and $\nu$ are tight and $M^2_{2}(\mu)/N_1=M^2_{2}(\nu)/N_2$, then $\gamma=\mu \otimes \nu$ is a tight probabilistic frame.
 \end{proposition}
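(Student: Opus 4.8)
The plan is to compute the quadratic form $z\mapsto\int|\ip{z}{w}|^{2}d\gamma(w)$ directly, exploiting the orthogonal splitting of $\R^{N_1}\times\R^{N_2}$. Write a generic point as $z=(x,y)$ with $x\in\R^{N_1}$, $y\in\R^{N_2}$, and likewise $w=(u,v)$; then $\ip{z}{w}=\ip{x}{u}+\ip{y}{v}$ and $\nm{z}^{2}=\nm{x}^{2}+\nm{y}^{2}$. Since $\mu,\nu\in\mathcal{P}_{2}$ by Theorem~\ref{thm1} and $|\ip{z}{w}|^{2}\le 2(\nm{x}^{2}\nm{u}^{2}+\nm{y}^{2}\nm{v}^{2})$ is $\mu\otimes\nu$-integrable, Fubini's theorem applies and, after expanding the square,
\begin{equation*}
\int|\ip{z}{w}|^{2}d\gamma(w)=\int|\ip{x}{u}|^{2}d\mu(u)+\int|\ip{y}{v}|^{2}d\nu(v)+2\big\langle x,\textstyle\int u\,d\mu(u)\big\rangle\big\langle y,\textstyle\int v\,d\nu(v)\big\rangle.
\end{equation*}
The hypothesis that at least one of $\mu,\nu$ has zero mean makes the cross term vanish.

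Next I would bound the two surviving integrals using the frame inequality \eqref{pfineq} for $\mu$ and for $\nu$, obtaining
\begin{equation*}
A_{\mu}\nm{x}^{2}+A_{\nu}\nm{y}^{2}\le\int|\ip{z}{w}|^{2}d\gamma(w)\le B_{\mu}\nm{x}^{2}+B_{\nu}\nm{y}^{2}.
\end{equation*}
Since $\min(A_{\mu},A_{\nu})\nm{z}^{2}=\min(A_{\mu},A_{\nu})(\nm{x}^{2}+\nm{y}^{2})\le A_{\mu}\nm{x}^{2}+A_{\nu}\nm{y}^{2}$ and likewise $B_{\mu}\nm{x}^{2}+B_{\nu}\nm{y}^{2}\le\max(B_{\mu},B_{\nu})\nm{z}^{2}$, the first claim follows. (One could alternatively obtain membership in $\mathcal{P}_{2}$ from $M_{2}^{2}(\gamma)=M_{2}^{2}(\mu)+M_{2}^{2}(\nu)<\infty$ and $E_{\gamma}=\R^{N_1}\times\R^{N_2}$ from $\supp(\mu\otimes\nu)=\supp(\mu)\times\supp(\nu)$ together with $E_{\mu}=\R^{N_1}$, $E_{\nu}=\R^{N_2}$, and invoke Theorem~\ref{thm1}; but the displayed identity is needed anyway for the explicit bounds and for the tight case.)

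For the tight case, set $A_{\mu}=B_{\mu}$ and $A_{\nu}=B_{\nu}$ in the identity above, which then reads $\int|\ip{z}{w}|^{2}d\gamma(w)=A_{\mu}\nm{x}^{2}+A_{\nu}\nm{y}^{2}$; this is a constant multiple of $\nm{z}^{2}=\nm{x}^{2}+\nm{y}^{2}$ if and only if $A_{\mu}=A_{\nu}$. By the moment formula in Theorem~\ref{thm1}, $A_{\mu}=M_{2}^{2}(\mu)/N_1$ and $A_{\nu}=M_{2}^{2}(\nu)/N_2$, so the assumption $M_{2}^{2}(\mu)/N_1=M_{2}^{2}(\nu)/N_2$ is precisely the condition $A_{\mu}=A_{\nu}$, under which $\gamma$ is a tight probabilistic frame with bound $A_{\mu}$.

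The argument is essentially routine; the only points deserving a word of care are the Fubini/integrability justification for exchanging the integrals and expanding the square (covered by the $\mathcal{P}_{2}$ hypothesis and the elementary bound on $|\ip{z}{w}|^{2}$) and, if one follows the softer route via Theorem~\ref{thm1}, the identification $\supp(\mu\otimes\nu)=\supp(\mu)\times\supp(\nu)$. Neither constitutes a genuine obstacle.
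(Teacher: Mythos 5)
Your proof is correct and follows essentially the same route as the paper: expand the quadratic form over the product, use the zero-mean hypothesis to kill the cross term, and bound the two remaining integrals by the respective frame bounds, with the tight case settled via the bound formula $A=\tfrac{1}{N}M_2^2$ from Theorem~\ref{thm1}. Your added remarks on Fubini/integrability and the alternative route through supports are fine but not a different argument.
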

\begin{proof}
Let $(z_1, z_2) \in \R^{N_1} \times \R^{N_2}$, then
\begin{align*}
\iint_{\R^{N_1}\times \R^{N_2}} \ip{(z_1, z_2)}{(x, y)}^{2}d\gamma(x, y)& = \iint_{\R^{N_1}\times \R^{N_2}} (\ip{z_1}{x} + \ip{z_2}{y})^{2}d\gamma(x, y)\\
& = \iint_{\R^{N_1}\times \R^{N_2}} \ip{z_1}{x}^{2} d\gamma(x, y) + \iint_{\R^{N_1}\times \R^{N_2}} \ip{z_2}{y}^{2} d\gamma(x, y) \\
&\qquad \qquad + 2\iint_{\R^{N_1}\times \R^{N_2}} \ip{z_1}{x}\ip{z_2}{y}d\gamma(x, y)\\
&= \int_{\R^{N_1}} \ip{z_1}{x}^{2} d\mu(x) + \int_{\R^{N_2}} \ip{z_2}{y}^{2} d\nu(y) \\
& \qquad \qquad + 2\int_{\R^{N_1}}\int_{\R^{N_2}} \ip{z_1}{x}\ip{z_2}{y}d\mu(x)d\nu(y)\\
& =  \int_{\R^{N_1}} \ip{z_1}{x}^{2} d\mu(x) + \int_{\R^{N_2}} \ip{z_2}{y}^{2} d\nu(y)
%
%
\end{align*}
where the last equation follows from the fact one of the two probability measures has zero mean. Consequently, 
\begin{equation*}
A_\mu \|z_1\|^2 + A_\nu \|z_2\|^2 \leq \iint_{\R^{N_1}\times \R^{N_2}} \ip{(z_1, z_2)}{(x, y)}^{2}d\gamma(x, y) \leq B_\mu \|z_1\|^2 + B_\nu \|z_2\|^2, 
\end{equation*} 
and the first part of the proposition follows from $\|(z_1,z_2)\|^2=\|z_1\|^2+\|z_2\|^2$. The above estimate and Theorem \ref{thm1} imply the second part.
\end{proof}

When $N_1=N_2=N$ in Proposition \ref{minw2} and $\mu$ and $\nu$ are tight probabilistic frames for $\R^N$ such that at least one of them has zero mean, then $\gamma=\mu \otimes \nu$ is a tight probabilistic frame for $\R^{N}\times \R^N$. It is obvious that the product measure $\gamma=\mu \otimes \nu$ has marginals $\mu$ and $\nu$, respectively, and hence is an element in $\Gamma(\mu, \nu)$, where this last set was defined in~\eqref{wmetric}. One could ask whether there are any other tight probabilistic frames in $\Gamma(\mu, \nu)$, and if so, how to find them. 

The following question is known in frame theory as the Paulsen problem, cf.~\cite{Bodmann:2010fk,Cahill:2011,Casazzaa:2010fk}: given a frame $\{\varphi_j\}_{j=1}^M\subset\R^N$, how far is the closest tight frame whose elements have equal norm? The distance between two frames $\Phi=\{\varphi_i\}_{i=1}^M$ and $\Psi=\{\psi_i\}_{i=1}^M$ is usually measured by means of the standard $\ell_2$-distance $\sum_{i=1}^M \|\varphi_i-\psi_i\|^2$.

The Paulsen problem can be recast in the probabilistic setting we have been considering, and this reformulation  seems flexible enough to yield new insights into the problem. Given any nonzero vectors $\Phi=\{\varphi_i\}_{i=1}^M$, there are two natural embeddings into the space of probability measures, namely 
\begin{equation*}
\mu_\Phi = \frac{1}{M}\sum_{i=1}^M\delta_{\varphi_i}\quad\text{and}\quad \mu^\Phi:=\frac{1}{\sum_{j=1}^M \|\varphi_j\|^2} \sum_{i=1}^M\|\varphi_i\|^2\delta_{\varphi_i/\|\varphi_i\|}.
\end{equation*}
The $2$-Wasserstein distance between $\mu_\Phi$ and $\mu_\Psi$ satisfies
\begin{equation}\label{eq:estimate Wasserstein and frame distance}
M\| \mu_\Phi- \mu_\Psi\|^2_{W_2} = \inf_{\pi\in \Pi_M}\sum_{i=1}^M \|\varphi_i-\psi_{\pi(i)}\|^2\leq \sum_{i=1}^M \|\varphi_i-\psi_i\|^2,
\end{equation}
where $\Pi_M$ denotes the set of all permutations of $\{1,\ldots,M\}$, cf.~\cite{Levina:2001fk}. The right hand side of~\eqref{eq:estimate Wasserstein and frame distance} represents the standard distance between frames and is sensitive to the the ordering of the frame elements. However, the Wasserstein distance allows to rearrange elements. More importantly, the $\ell_2$-distance requires both frames to have the same cardinalities. On the other hand, the Wasserstein metric enables us to determine how far two frames of different cardinalities are from each other. Therefore, in trying to solve the Paulsen problem, one can seek the closest tight unit norm frame without requiring that it has the same cardinality.

The second embedding $\mu^\Phi$ can be used to illustrate the above point.
 \begin{example}
If, for $\varepsilon\geq 0$,  
\begin{equation*}
\Phi_\varepsilon=\{(1,0)^\top, \sqrt{\frac{1}{2}}(\sin(\varepsilon),\cos(\varepsilon))^\top, \sqrt{\frac{1}{2}}(\sin(-\varepsilon),\cos(-\varepsilon))^\top\},
\end{equation*}
then $\mu^{\Phi_\epsilon}\rightarrow \frac{1}{2}(\delta_{e_1}+\delta_{e_2})$ in the $2$-Wasserstein metric as $\varepsilon\rightarrow 0$, where $\{e_i\}_{i=1}^2$ is the canonical orthonormal basis for $\R^2$. Thus, $\{e_i\}_{i=1}^2$ is close to $\Phi_\varepsilon$ in the probabilistic setting. Since $\{e_i\}_{i=1}^2$ has only $2$ vectors, it is not even under consideration when looking for any tight frame that is close to $\Phi_\varepsilon$ in the standard $\ell_2$-distance. 
 \end{example}   
 
We finish this subsection with a list of open problems whose solution can shed new light on frame theory. The first three questions are related to the Paulsen problem, cf.~\cite{Bodmann:2010fk,Cahill:2011,Casazzaa:2010fk}, that we have already mentioned above:

\begin{problem}\label{questpaulsen}
\begin{itemize}
\item[(a)] \hspace{.5ex}Given a probabilistic frame $\mu\in\mathcal{P}(S^{N-1})$, how far is the closest probabilistic tight unit norm frame $\nu\in\mathcal{P}(S^{N-1})$ with respect to the $2$-Wasserstein metric and how can we find it? Notice that in this case,  $\mathcal{P}_{2}(S^{N-1})=\mathcal{P}(S^{N-1})$ is a compact set, see, e.g.,  \cite[Theorem 6.4]{krp67}.

\item[(b)] \hspace{.5ex}Given a unit norm probabilistic frame $\mu\in\mathcal{P}_2$, how far is the closest probabilistic tight unit norm frame $\nu\in\mathcal{P}_2$ with respect to the $2$-Wasserstein metric and how can we find it? 

\item[(c)] \hspace{.5ex}Replace the $2$-Wasserstein metric in the above two problems with  different Wasserstein $p$-metrics $W^p_p(\mu,\nu)=\inf_{\gamma\in\Gamma(\mu,\nu)}\int_{\R^N\times\R^N} \|x-y\|^p d\gamma(x,y)$, where $2\neq p\in (1, \infty)$. 

\item[(d)] \hspace{.5ex}Let $\mu$ and $\nu$ be two probabilistic tight frames on $\R^N$, such that at least one of them has zero mean. Recall that  $\Gamma(\mu, \nu)$ is the set of all probability measures on $\R^N \times \R^N$ whose marginals are $\mu$ and $\nu$, respectively. Is the minimizer $\gamma_0\in\Gamma(\mu, \nu)$ for $W_{2}^{2}(\mu, \nu)$ a probabilistic tight frame? Alternatively, are there any other probabilistic tight frames in $\Gamma(\mu, \nu)$ besides the product measure?

\end{itemize}
\end{problem}

\subsection{The probabilistic frame and the Gram operators}\label{pfoperator}
To better understand the notion of probabilistic frames, we consider some related operators that encode all the properties of the measure $\mu$. 
Let $\mu \in \mathcal{P}$ be a probabilistic frame. The \emph{probabilistic analysis operator} is given by
 \begin{equation*}
 T_\mu: \R^N \rightarrow L^2(\R^N,\mu), \quad x\mapsto \langle x,\cdot \rangle.
 \end{equation*}
Its adjoint operator is defined by  
 \begin{equation*}
  T^*_\mu: L^2(\R^N,\mu) \rightarrow\R^N , \quad f\mapsto \int_{ \R^N } f(x)xd\mu(x)
 \end{equation*} and is called 
the \emph{probabilistic synthesis operator}, where the above  integral is vector-valued. The \emph{probabilistic Gram operator}, also called the \emph{probabilistic Grammian} of $\mu$, is $G_{\mu}=T_{\mu}T^{*}_{\mu}$. The \emph{probabilistic frame operator} of $\mu$ is $S_\mu=T^*_\mu T_\mu$, and one easily verifies that 
\begin{equation*}
S_\mu:\R^N\rightarrow \R^N,\qquad S_\mu (x) = \int_{\R^N} y\langle x,y\rangle d\mu(y).
\end{equation*}
If $\{\varphi_j\}_{j=1}^N$ is the canonical orthonormal basis for $\R^N$, then the vector valued integral yields
 \begin{equation*}
 \int_{\R^N}  y^{(i)} y d\mu(y)  = \sum_{j=1}^N \int_{\R^N}  y^{(i)} y^{(j)} d\mu(y) \varphi_j,
 \end{equation*}
 where $y=(y^{(1)},\ldots,y^{(N)})^\top\in\R^N$. 
If we denote the second moments of $\mu$ by $m_{i,j}(\mu)$, i.e.,
\begin{equation*}
m_{i,j}(\mu) = \int_{\R^N} x^{(i)} x^{(j)} d\mu(x),\quad \text{for $i,j=1,\ldots,N$,}
\end{equation*}
then we obtain
 \begin{equation*}
S_\mu \varphi_i = \int_{\R^N}  y^{(i)} y d\mu(y) =  \sum_{j=1}^N\int_{\R^N}  y^{(i)} y^{(j)} d\mu(y) \varphi_j = \sum_{j=1}^N m_{i,j}(\mu) \varphi_j.
 \end{equation*}
Thus, the probabilistic frame operator is the matrix of second moments.

The Grammian of $\mu$ is the kernel operator defined on $L^2 (\R^N, \mu)$ by
\begin{equation*}G_{\mu}f(x)=T_{\mu}T^{*}_{\mu}f(x)=\int_{\R^{N}}K(x,y)f(y)d\mu(y)=\int_{\R^{N}}\langle x,y\rangle f(y)d\mu(y).
\end{equation*} 
It is trivially seen that $G$ is a compact operator on $L^{2}(\R^{N}, \mu)$ and in fact it is trace class and Hilbert-Schmidt. Indeed, its kernel is symmetric, continuous,  and in $L^{2}(\R^{N}\times \R^{N}, \mu\otimes \mu ) \subset 
L^{1}(\R^{N}\times \R^{N}, \mu\otimes \mu ).$  Moreover, for any $f \in L^{2}(\R^{N}, \mu)$, $G_{\mu}f$ is a uniformly continuous function on $\R^N$.

Let us collect some properties of $S_\mu$ and $G_\mu$:
\begin{proposition}\label{propg-s} If $\mu\in\mathcal{P}$, then the following points hold:

\noindent a) $S_\mu$ is well-defined (and hence bounded) if and only if 
\begin{equation*}
 M_2(\mu)<\infty.
\end{equation*}

\noindent b) $\mu$ is a probabilistic frame if and only if $S_\mu$ is well-defined and positive definite. 

\noindent c) The nullspace of $G_\mu$ consists of all functions in $L^{2}(\R^{N}, \mu)$ such that $$\int_{\R^N}y f(y) d\mu(y)=0.$$ Moreover,  the eigenvalue $0$ of $G_\mu$ has infinite multiplicity, that is, its eigenspace is infinite dimensional. 

\end{proposition}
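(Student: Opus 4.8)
The plan is to treat the three assertions of Proposition~\ref{propg-s} in turn, each reducing to a short computation with the integral operators $T_\mu, T_\mu^*, S_\mu, G_\mu$.

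For part a), I would first observe that $S_\mu x = \int_{\R^N} y\ip{x}{y}\,d\mu(y)$ is, coordinatewise, the matrix of second moments $m_{i,j}(\mu)$, as already computed in the text preceding the proposition. Hence $S_\mu$ is a well-defined linear map on $\R^N$ precisely when all the integrals $m_{i,j}(\mu)$ converge absolutely; by Cauchy--Schwarz in $L^2(\mu)$ this holds iff $m_{i,i}(\mu)=\int_{\R^N}(x^{(i)})^2\,d\mu(x)<\infty$ for every $i$, i.e.\ iff $\int_{\R^N}\nm{x}^2\,d\mu(x)=M_2^2(\mu)<\infty$. Since a linear map on a finite-dimensional space is automatically bounded, well-definedness and boundedness coincide here, which gives the stated equivalence. (One direction is also immediate from Theorem~\ref{thm1}, but the moment computation is the cleanest self-contained argument.)

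For part b), recall $S_\mu = T_\mu^* T_\mu$, so $\ip{S_\mu x}{x} = \nm{T_\mu x}_{L^2(\mu)}^2 = \int_{\R^N}|\ip{x}{y}|^2\,d\mu(y)$ for all $x\in\R^N$; in particular $S_\mu$ is always symmetric positive semidefinite when defined. Now if $\mu$ is a probabilistic frame then by Definition~\ref{defpf} this quadratic form is bounded below by $A\nm{x}^2>0$ for $x\neq0$, so $S_\mu$ is well-defined (part a) and positive definite. Conversely, if $S_\mu$ is well-defined and positive definite, set $A$ to be its smallest eigenvalue and $B$ its largest; then $A\nm{x}^2\le\ip{S_\mu x}{x}\le B\nm{x}^2$ with $0<A\le B<\infty$, which is exactly \eqref{pfineq}. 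So $\mu$ is a probabilistic frame.

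For part c), the nullspace description is immediate: $G_\mu f = T_\mu(T_\mu^* f)$, and $T_\mu$ is injective on $\R^N$ whenever $\mu$ is not supported in a proper subspace --- but more simply, $\nm{G_\mu f}$-type reasoning is unnecessary since $\ip{G_\mu f}{f}_{L^2(\mu)} = \nm{T_\mu^* f}^2$, so $G_\mu f = 0 \iff T_\mu^* f = 0 \iff \int_{\R^N} y f(y)\,d\mu(y) = 0$. For the infinite multiplicity of the eigenvalue $0$, the key point is that $T_\mu^*: L^2(\R^N,\mu)\to\R^N$ maps an infinite-dimensional space to an $N$-dimensional one, so its kernel $\ker T_\mu^* = \ker G_\mu$ has infinite dimension provided $L^2(\R^N,\mu)$ itself is infinite-dimensional. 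The one genuine subtlety --- and the step I expect to need the most care --- is that $L^2(\R^N,\mu)$ could in principle be finite-dimensional, namely if $\supp(\mu)$ is a finite set. I would handle this by noting that if $\mu$ is a probabilistic frame whose support is finite then $\supp(\mu)$ has at least $N$ points; the statement as phrased presumably intends $\mu$ with infinite support (or one restricts to that case), and I would add a sentence making this hypothesis explicit, since for a finitely supported $\mu$ the Grammian is a finite matrix and ``infinite multiplicity'' is vacuously false. Modulo that caveat, $\dim\ker G_\mu = \dim L^2(\R^N,\mu) - \operatorname{rank} T_\mu^* \ge \dim L^2(\R^N,\mu) - N = \infty$.
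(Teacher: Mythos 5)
Your proposal is correct, and for parts a) and b) it runs essentially parallel to the paper: the paper proves a) by estimating $\int_{\R^N}\|y\|^2\,d\mu(y)=\sum_i\langle S_\mu e_i,e_i\rangle\le N\|S_\mu\|$ for an orthonormal basis $\{e_i\}$ and, conversely, by bounding $T_\mu$ via Cauchy--Schwarz, whereas you argue through absolute convergence of the moment matrix entries $m_{i,j}(\mu)$; both are two-line computations of the same content. Part b) is the same argument in both cases (quadratic form identity $\langle S_\mu x,x\rangle=\int|\langle x,y\rangle|^2d\mu(y)$, then smallest eigenvalue as lower frame bound; you use the largest eigenvalue where the paper uses $M_2^2(\mu)$ for the upper bound, an immaterial difference). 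In part c) the nullspace characterization coincides with the paper's (testing $G_\mu f$ against all $x\in\R^N$ is the same as your identity $\langle G_\mu f,f\rangle_{L^2(\mu)}=\|T_\mu^*f\|^2$ combined with positivity of $G_\mu$). Where you genuinely diverge is the infinite-multiplicity claim: the paper dismisses it as following ``from general principles about compact operators,'' while you give the concrete rank--nullity argument $\ker G_\mu=\ker T_\mu^*$ with $\operatorname{rank}T_\mu^*\le N$, hence $\dim\ker G_\mu=\infty$ whenever $L^2(\R^N,\mu)$ is infinite-dimensional. Your caveat is also a real one: for a finitely supported $\mu$ (e.g.\ $\mu_\Phi$ from Proposition~\ref{frame2pf}), $L^2(\R^N,\mu)$ is finite-dimensional and the eigenvalue $0$ has only finite multiplicity, so the paper's unqualified statement requires the implicit assumption of infinite support; making that hypothesis explicit, as you propose, is an improvement rather than a gap in your argument.
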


For the sake of completeness, we give a detailed proof of Proposition \ref{propg-s}:
\begin{proof} 
Part a): If $S_\mu$ is well-defined, then it is bounded as a linear operator on a finite dimensional Hilbert space. If $\|S_\mu\|$ denote its operator norm and $\{e_i\}_{i=1}^N$ is an orthonormal basis for $\R^N$, then 
\begin{equation*}
\int_{\R^N}\|y\|^2d\mu(y) = \sum_{i=1}^N \int_{\R^N} \langle e_i,y\rangle \langle y,e_i\rangle d\mu(y) = \sum_{i=1}^N \langle S_\mu(e_i),e_i\rangle \leq \sum_{i=1}^N\|S_\mu(e_i)\|\leq N\|S_\mu\|.
\end{equation*}
On the other hand, if $M_2(\mu)<\infty$, then 
\begin{equation*}
\int_{\R^N} |\langle x,y\rangle|^2d\mu(y) \leq \int_{\R^N} \|x\|^2 \|y\|^2d\mu(y) = \|x\|^2 M_2^2(\mu),
\end{equation*}
and, therefore, $T_\mu$ is well-defined and bounded. So is $T_\mu^*$ and hence $S_\mu$ is well-defined and bounded.

Part b): If $\mu$ is a probabilistic frame, then $M_2(\mu)<\infty$, cf.~Theorem \ref{thm1}, and hence $S_\mu$ is well-defined. If $A>0$ is the lower frame bound of $\mu$, then we obtain
\begin{equation*}
\langle x , S_\mu(x)\rangle =  \int_{\R^N} \langle x,y\rangle \langle x,y\rangle d\mu(y) =   \int_{\R^N} |\langle x,y\rangle|^2 d\mu(y) \geq A\|x\|^2,\quad\text{for all $x\in\R^N$,}
\end{equation*}
so that $S_\mu$ is positive definite.

Now, let $S_\mu$ be well-defined and positive definite. According to part a), $M^2_2(\mu)<\infty$ so that the upper frame bound exists. Since $S_\mu$ is positive definite, its eigenvectors $\{v_i\}_{i=1}^N$ are a basis for $\R^N$ and the  eigenvalues $\{\lambda_i\}_{i=1}^N$, respectively, are all positive. Each $x\in\R^N$ can be expanded as $x=\sum_{i=1}^N a_i v_i$ such that $\sum_{i=1}^N a_i^2 = \|x\|^2$. If $\lambda>0$ denotes the smallest eigenvalue, then we obtain
\begin{equation*}
\int_{\R^N} |\langle x,y\rangle |^2 d\mu(y) = \langle x,S_\mu(x)\rangle = \sum_{i,j} a_i\langle v_i,\lambda_j a_j v_j\rangle = \sum_{i=1}^N a_i^2 \lambda_i\geq \lambda \|x\|^2,
\end{equation*}
so that $\lambda$ is the lower frame bound. 

For part c) notice that $f $ is in the nullspace of $G_\mu$ if and only if 
\begin{equation*}
0 = \int_{\R^N}\langle x,y\rangle f(y)d\mu(y) =\langle x ,\int_{\R^N} yf(y)d\mu(y) \rangle  ,\quad\text{for each $x \in \R^N$.}
\end{equation*} 
The above condition is equivalent to $\int_{\R^N} yf(y)d\mu(y)=0$. The fact that the eigenspace corresponding to the eigenvalue $0$ has infinite dimension follows from general principles about compact operators. 
\end{proof}

A key property of probabilistic frames is that they give rise to a reconstruction formula similar to the one used in frame theory. Indeed, if $\mu \in \mathcal{P}_2$ is  a probabilistic frame, set  $\tilde{\mu}=\mu \circ S_\mu$, and we obtain
\begin{equation}\label{reconsppf}
x= \int_{\R^N} \langle x, y\rangle \, S_{\mu}y \, d\tilde{\mu}(y)= \int_{\R^N} y \, \langle S_\mu y, x\rangle \, d\tilde{\mu}(y),\quad\text{for all $x\in\R^N$.}
\end{equation} 
This  follows from $S_\mu^{-1}S_\mu=S_\mu S_\mu^{-1}=Id$. In fact, if $\mu$ is a probabilistic frame for $\R^N$, then $\tilde{\mu}$ is a probabilistic frame for $\R^N$. Note that if $\mu$ is the counting measure corresponding to a finite unit norm tight frame $(\varphi_i)_{i=1}^M$, then $\tilde{\mu}$ is the counting measure associated to the canonical dual frame of $(\varphi_i)_{i=1}^M$, and Equation \eqref{reconsppf} reduces to the known reconstruction formula for finite frames. These observations motivate the following definition:

\begin{definition}\label{candualppf} If $\mu $ is a probabilistic frame, then $\tilde{\mu}=\mu \circ S_\mu $ is  called the \emph{probabilistic canonical dual frame} of $\mu$.
\end{definition}

Many properties of finite frames can be carried over. For instance, we can follow the lines in \cite{Christensen:2003ab} to derive a generalization of the canonical tight frame:
\begin{proposition}
If $\mu$ is a probabilistic frame for $\R^N$, then $\mu\circ S_\mu^{1/2}$ is a tight probabilistic frame for $\R^N$. 
\end{proposition}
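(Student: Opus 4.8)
The plan is first to unwind the notation. Consistently with the convention used for $\tilde\mu=\mu\circ S_\mu$ in the reconstruction formula \eqref{reconsppf}, the symbol $\mu\circ S_\mu^{1/2}$ denotes the set function $A\mapsto\mu\big(S_\mu^{1/2}(A)\big)$, i.e.\ the pushforward $\nu:=(S_\mu^{-1/2})_{\#}\mu$ of $\mu$ under the linear map $S_\mu^{-1/2}$, so that $\int_{\R^N}f\,d\nu=\int_{\R^N}f\big(S_\mu^{-1/2}y\big)\,d\mu(y)$ for every nonnegative Borel function $f$. This is legitimate because $\mu$ is a probabilistic frame, so by Proposition \ref{propg-s} b) the operator $S_\mu$ is well-defined and positive definite; hence $S_\mu^{1/2}$ and $S_\mu^{-1/2}$ exist as symmetric positive definite operators on $\R^N$, and being linear isomorphisms they are homeomorphisms of $\R^N$. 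In particular $\nu$ is a Borel probability measure with $\supp(\nu)=S_\mu^{-1/2}\big(\supp(\mu)\big)$, whence $E_\nu=S_\mu^{-1/2}(E_\mu)=S_\mu^{-1/2}(\R^N)=\R^N$.

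The core of the argument is a one-line change of variables in the integral defining the frame operator of $\nu$. For $x\in\R^N$, using the pushforward identity, the self-adjointness of $S_\mu^{-1/2}$, the definition of $S_\mu$ applied to the vector $w:=S_\mu^{-1/2}x$ (recall $\langle w,S_\mu w\rangle=\int_{\R^N}|\langle w,y\rangle|^2\,d\mu(y)$), and $S_\mu S_\mu^{-1/2}=S_\mu^{1/2}$, I get
\begin{align*}
\int_{\R^N}|\langle x,z\rangle|^2\,d\nu(z)
&=\int_{\R^N}\big|\langle x,S_\mu^{-1/2}y\rangle\big|^2\,d\mu(y)
=\int_{\R^N}\big|\langle S_\mu^{-1/2}x,y\rangle\big|^2\,d\mu(y)\\
&=\big\langle S_\mu^{-1/2}x,\ S_\mu S_\mu^{-1/2}x\big\rangle
=\big\langle S_\mu^{-1/2}x,\ S_\mu^{1/2}x\big\rangle
=\big\langle x,\ S_\mu^{-1/2}S_\mu^{1/2}x\big\rangle
=\|x\|^2.
\end{align*}
Since this chain is valid a priori in $[0,\infty]$ and the final value $\|x\|^2$ is finite, the integrand is $\nu$-integrable, $\nu\in\mathcal{P}_2$, and $\nu$ satisfies \eqref{pfineq} with $A=B=1$; thus $\nu=\mu\circ S_\mu^{1/2}$ is a tight probabilistic frame for $\R^N$. (As a consistency check, Theorem \ref{thm1} then also gives the bound $A=\tfrac1N M_2^2(\nu)$, forcing $M_2^2(\nu)=N$.)

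I do not expect any genuine obstacle: the statement is essentially the probabilistic transcription of the finite-frame fact that $S^{-1/2}\Phi$ is Parseval when $\Phi$ has frame operator $S$, and the proof is a linear change of variables. The only points needing (mild) care are (i) making sure $S_\mu^{1/2}$ is a bona fide symmetric positive definite operator, which is exactly where positive definiteness from Proposition \ref{propg-s} b) is used, and (ii) pinning down the meaning of $\mu\circ S_\mu^{1/2}$ as a pushforward so that the change-of-variables step and the identification $\supp(\nu)=S_\mu^{-1/2}(\supp\mu)$ are justified. Once the pushforward is set up correctly, the tightness identity $\int_{\R^N}|\langle x,z\rangle|^2\,d\nu(z)=\|x\|^2$ drops out immediately.
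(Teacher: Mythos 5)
Your proof is correct, and it is essentially the intended argument: the paper states this proposition without proof (only pointing to the finite-frame canonical tight frame construction), and the natural verification is exactly your pushforward change of variables, reading $\mu\circ S_\mu^{1/2}$ as $A\mapsto\mu\big(S_\mu^{1/2}(A)\big)$ consistently with the convention for $\tilde{\mu}=\mu\circ S_\mu$, which reduces tightness to $S_\mu^{-1/2}S_\mu S_\mu^{-1/2}=I_N$ and gives the Parseval bound $A=B=1$ (with $M_2^2(\nu)=\trace(S_\mu^{-1}S_\mu)=N$, matching Theorem \ref{thm1}). Your care in fixing the pushforward convention is warranted, since the opposite reading $(S_\mu^{1/2})_{\#}\mu$ would yield $\langle x,S_\mu^2x\rangle$ and fail to be tight.
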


\begin{remark}\label{hilbert}
The notion of probabilistic frames that we developed thus far in finite dimensional Euclidean spaces can be defined on  any infinite dimensional separable real Hilbert space $X$ with norm $\|\cdot \|_X$ and inner product $\ip{\cdot}{\cdot}_X$. We call a Borel probability measure $\mu$ on $X$ a \emph{probabilistic frame for $X$} if there exist $0<A\leq B < \infty$ such that 
\begin{equation*}
A\|x\|^2 \leq \int_{X}|\ip{x}{y}|^2 d\mu(y) \leq B \|x\|^2,\quad\text{for all $x \in X$.} 
\end{equation*}
If $A=B$, then we call $\mu$ a probabilistic tight frame and we will present a complete theory of these probabilistic frames in a forthcoming paper. 
\end{remark}

\section{Probabilistic frame potential}\label{pfpot}
The frame potential was defined in \cite{bf03,me11,Rene04,Wal03}, and we introduce the probabilistic analog:
\begin{definition}\label{def2}
For $\mu \in \mathcal{P}_{2}$, the \emph{probabilistic frame potential} is
\begin{equation}\label{eqpfp}
\pfp(\mu)=\iint_{\R^{N}\times \R^{N}}|\langle x,y\rangle |^{2}\, d\mu(x)\, d\mu(y).
\end{equation}
\end{definition}
Note that $\pfp(\mu)$ is well defined for each $\mu \in \mathcal{P}_2$ and $\pfp(\mu)\leq M_{2}^{4}(\mu)$.

In fact, the probabilistic frame potential is just the Hilbert-Schmidt norm of the operator $G_\mu$, that is 
\begin{equation*}
\nm{G_\mu}_{HS}^{2}=\iint_{\R^{N}\times \R^{N}} \ip{x}{y}^{2}d\mu(x) d\mu(y)=\sum_{\ell=0}^{\infty}\lambda_{\ell}^{2},
\end{equation*}
where $\lambda_{k}:=\lambda_{k}(\mu)$ is the $k$-th eigenvalue of $G_\mu$.  
If $\Phi=\{\varphi_{i}\}_{i=1}^{M}$ $M\geq N$ is a finite unit norm tight frame, and 
$\mu=\tfrac{1}{M}\sum_{i=1}^{M}\delta_{\varphi_{i}}$ is the corresponding probabilistic tight frame, then 
\begin{equation*}
\pfp(\mu)= \tfrac{1}{M^{2}}\sum_{i, j=1}^{M}\ip{\varphi_{i}}{\varphi_{j}}^{2}= \tfrac{1}{M^{2}} \tfrac{M^{2}}{N}=\tfrac{1}{N}.
\end{equation*}

According to Theorem 4.2 in \cite{me11}, we have 
\begin{equation*}
\pfp(\mu) \geq \frac{1}{N} M_2^4(\mu),
\end{equation*}
and, except for the measure $\delta_0$, equality holds if and only if $\mu$ is a probabilistic tight frame.

\begin{theorem}\label{minfp} 
If $\mu \in \mathcal{P}_{2}$ such that $M_{2}(\mu)=1$, then 
\begin{equation}\label{estpfp1}
 \pfp(\mu) \geq 1/n,
 \end{equation}  
 where $n$ is the number of nonzero eigenvalues of $S_\mu$. Moreover,  equality holds if and only if $\mu$ is a probabilistic tight frame for $E_{\mu}$. 
 \end{theorem}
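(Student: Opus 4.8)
The plan is to relate $\pfp(\mu)$ to the eigenvalues $\{\lambda_i\}_{i=1}^N$ of the probabilistic frame operator $S_\mu$ via the identity $\pfp(\mu) = \sum_{i=1}^N \lambda_i^2$. Indeed, $S_\mu$ and $G_\mu$ share the same nonzero eigenvalues with the same multiplicities (a standard fact since $S_\mu = T_\mu^*T_\mu$ and $G_\mu = T_\mu T_\mu^*$), so from the Hilbert–Schmidt computation already recorded in the excerpt, $\pfp(\mu) = \nm{G_\mu}_{HS}^2 = \sum_{\ell}\lambda_\ell(G_\mu)^2 = \sum_{i=1}^N \lambda_i(S_\mu)^2$. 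Next I would invoke the trace constraint: since $S_\mu$ is the matrix of second moments, $\trace(S_\mu) = \sum_{i=1}^N m_{i,i}(\mu) = \int_{\R^N}\nm{x}^2\,d\mu(x) = M_2^2(\mu) = 1$ by hypothesis. Hence $\sum_{i=1}^N \lambda_i = 1$, and exactly $n$ of the $\lambda_i$ are nonzero (the rank of $S_\mu$ equals the number of its nonzero eigenvalues, which equals $\dim E_\mu =: n$, since the range of $S_\mu$ is $E_\mu$).

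The inequality \eqref{estpfp1} is then pure Cauchy–Schwarz / power-mean: among nonnegative reals $\lambda_{i_1},\dots,\lambda_{i_n}$ summing to $1$, we have $\sum_k \lambda_{i_k}^2 \geq \tfrac{1}{n}\big(\sum_k \lambda_{i_k}\big)^2 = \tfrac{1}{n}$, with equality if and only if all $n$ nonzero eigenvalues are equal, i.e., each equals $1/n$. So $\pfp(\mu) = \sum_{i=1}^N\lambda_i^2 \geq 1/n$, and equality holds precisely when $S_\mu$ restricted to $E_\mu$ is $\tfrac{1}{n}\,\mathrm{Id}_{E_\mu}$.

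For the equality characterization I would translate "$S_\mu|_{E_\mu} = \tfrac1n \mathrm{Id}_{E_\mu}$" into the statement that $\mu$ is a tight probabilistic frame for $E_\mu$. One direction: if $\mu$ is a tight probabilistic frame for $E_\mu$ with bound $A$, then by (the $E_\mu$-version of) Theorem~\ref{thm1} applied in the Hilbert space $E_\mu$ of dimension $n$, $A = \tfrac1n M_2^2(\mu) = \tfrac1n$, so all nonzero eigenvalues equal $1/n$ and $\pfp(\mu)=1/n$. Conversely, if $\pfp(\mu)=1/n$ then the nonzero eigenvalues of $S_\mu$ are all $1/n$; since $\ker S_\mu = E_\mu^\perp$ and $S_\mu$ preserves $E_\mu$, for every $x\in E_\mu$ we get $\int_{\R^N}|\ip{x}{y}|^2\,d\mu(y) = \ip{x}{S_\mu x} = \tfrac1n\nm{x}^2$, which is exactly tightness of $\mu$ as a probabilistic frame for $E_\mu$ (note $\supp\mu\subset E_\mu$, so all the inner products only see the $E_\mu$-component).

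The only genuinely delicate point is the identification of $n$ with the number of nonzero eigenvalues of $S_\mu$ and the compatibility of the two eigenvalue counts (for $S_\mu$ on $\R^N$ versus $G_\mu$ on the infinite-dimensional $L^2(\R^N,\mu)$); this is where I would be careful, using $\overline{\mathrm{ran}\,S_\mu} = \overline{\mathrm{ran}\,T_\mu^*} = (\ker T_\mu)^\perp$ and the fact that $\ker T_\mu = \{x : \ip{x}{y}=0 \ \mu\text{-a.e.}\} = E_\mu^\perp$, so $\mathrm{rank}\,S_\mu = \dim E_\mu = n$. Everything else is the elementary power-mean inequality and bookkeeping, so I expect no real obstacle there.
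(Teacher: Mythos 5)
Your proposal is correct and follows essentially the same route as the paper: identify $\pfp(\mu)$ with $\nm{G_\mu}_{HS}^2=\sum_k\lambda_k(S_\mu)^2$, use $\trace(S_\mu)=M_2^2(\mu)=1$, and minimize the sum of squares under the linear constraint. In fact you are more careful than the paper's terse proof on two points it glosses over -- the identification of $n$ with $\mathrm{rank}\,S_\mu=\dim E_\mu$ via $\ker T_\mu=E_\mu^{\perp}$, and the translation of the equality case into tightness of $\mu$ on $E_\mu$ -- so no gaps remain.
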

Note that we must identify $E_{\mu}$ with the real $\dim(E_{\mu})$-dimensional Euclidean space in Theorem \ref{minfp} to speak about probabilistic frames for $E_{\mu}$. Moreover, Theorem \ref{minfp} yields that if $\mu \in \mathcal{P}_{2}$ such that $M_{2}(\mu)=1$, then $\pfp(\mu) \geq 1/N$, and equality holds if and only if $\mu$ is a probabilistic tight frame for $\R^N$.

\begin{proof} 
Recall that $\sigma(G_\mu)=\sigma(S_\mu) \cup \{0\}$, where $\sigma(T)$ denotes the spectrum of the operator $T$. Moreover, because $G_\mu$ is compact its spectrum consists only of eigenvalues. Moreover, the condition on the support of $\mu$ implies that the eigenvalues $\{\lambda_k\}_{k=1}^N$ of $S_\mu$ are all positive. Since 
\begin{equation*}
\sigma(G_\mu)=\sigma(S_\mu)\cup \{0\}= \{\lambda_{k}\}_{k=1}^{N} \cup \{0\},
\end{equation*}
the proposition reduces to minimizing $\sum_{k=1}^{N}\lambda_{k}^{2}$ under the constraint $\sum_{k=1}^{N}\lambda_{k}=1$, which concludes the proof.
\end{proof}

\section{Relations to other fields}\label{relfields}
{\bf Probabilistic frames, isotropic measures, and the geometry of convex bodies}\\
A finite nonnegative Borel measure $\mu$ on $S^{N-1}$ is called \emph{isotropic} in \cite{gimi00,lyz07} if 
\begin{equation*}
\int_{S^{N-1}}|\langle x,y\rangle|^{2}\, d\mu(y) = \frac{\mu(S^{N-1})}{N}\qquad \forall\, \, x \in S^{N-1}.
\end{equation*}
Thus, every tight probabilistic frame $\mu\in\mathcal{P}(S^{N-1})$ is an isotropic measure. The term isotropic is also used for special subsets in $\R^N$. Recall that  a subset $K \subset \R^N$ is called a convex body if $K$ is compact, convex, and has nonempty interior. According to \cite[Section 1.6]{Milman:1987aa} and \cite{gimi00}, a convex body $K$ with centroid at the origin and unit volume, i.e., $\int_{K}x dx =0$ and $\vol_{N}(K)=\int_K dx=1$, is said to be in \emph{isotropic position} if there exists a constant $L_K$ such that
\begin{equation}\label{isomeas}
\int_{K}|\langle x,y\rangle|^{2}\, dy = L_{K}\qquad \forall\, \, x \in S^{N-1}.
\end{equation}
Thus, $K$ is in isotropic position if and only if the uniform probability measure on $K$, denoted by $\sigma_K$, is a tight probabilistic frame. The constant $L_K$ must then satisfy $L_K=\tfrac{1}{N} \int_{K} \|x\|^2d\sigma_K(x)$. 

In fact, the two concepts, isotropic measures and being in isotropic position, can be combined within probabilistic frames as follows: given any tight probabilistic frame $\mu\in\mathcal{P}$ on $\R^N$, let $K_\mu$ denote the convex hull of $\supp(\mu)$. Then for each $x \in S^{N}$ we have 
\begin{equation*}
\int_{\R^{N}}|\langle x,y\rangle|^{2}\,d\mu(y)=\int_{\supp(\mu)}|\langle x,y\rangle|^{2}\,d\mu(y)=\int_{K_\mu}|\langle x,y\rangle|^{2}\,d\mu(y)
\end{equation*}
Though, $K_\mu$ might not be a convex body, we see that the convex hull of the support of every tight probabilistic frame is in ``isotropic  position'' with respect to $\mu$. 

In the following, let $\mu\in\mathcal{P}(S^{N-1})$ be a probabilistic unit norm tight frame with zero mean. In this case, $K_\mu$ is a convex body and
\begin{equation*}
\vol_N(K_\mu)\geq \frac{(N+1)^{(N+1)/2}}{N!}N^{-N/2},
\end{equation*}
where equality holds if and only if $K_\mu$ is a regular simplex, cf.~\cite{Ball:1992fk,lyz07}. Note that the extremal points of the regular simplex form an equiangular tight frame $\{\varphi_i\}_{i=1}^{N+1}$, i.e., a tight frame whose pairwise inner products $|\langle \varphi_i,\varphi_j\rangle|$ do not depend on $i\neq j$. Moreover, the polar body $P_\mu:=\{x\in\R^N: \langle x,y\rangle\leq 1,\text{ for all } y\in\supp(\mu)\}$ satisfies
\begin{equation*}
\vol_N(P_\mu) \leq \frac{(N+1)^{(N+1)/2}}{N!}N^{N/2},
\end{equation*}
and, again, equality holds if and only if $K_\mu$ is a regular simplex, cf.~\cite{Ball:1992fk,lyz07}. 

Probabilistic tight frames are also related to inscribed ellipsoids of convex bodies. Note that each convex body contains a unique ellipsoid of maximal volume, called John's ellipsoid, cf.~\cite{John:1948uq}. Therefore, there is an affine transformation $Z$ such that the ellipsoid of maximal volume of $Z(K)$ is the unit ball. A characterization of such transformed convex bodies was derived in \cite{John:1948uq}, see also \cite{Ball:1992fk}: 
\begin{theorem}\label{th:John}
The unit ball $B\subset \R^N$ is the ellipsoid of maximal volume in the convex body $K$ if and only if $B\subset K$ and, for some $M\geq N$, there are $\{\varphi_i\}_{i=1}^M\subset S^{N-1}\cap \partial K$ and positive numbers $\{c_i\}_{i=1}^M$ such that
\begin{itemize}
\item[(a)] $\sum_{i=1}^M c_i \varphi_i = 0$ and
\item[(b)] $\sum_{i=1}^M c_i\varphi_i \varphi_i^\top = I_N$.
\end{itemize} 
\end{theorem}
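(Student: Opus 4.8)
The plan is to prove the two implications of the equivalence separately. The ``if'' direction is a short extremality computation; the ``only if'' direction rests on a separating‑hyperplane argument followed by a perturbation estimate, which is where the real work lies.

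\textbf{The ``if'' direction.} Assume $B\subseteq K$ and that contact points $\{\varphi_i\}_{i=1}^M\subseteq S^{N-1}\cap\partial K$ and weights $c_i>0$ obeying (a) and (b) are given. First I would record that at each contact point the tangent hyperplane of $B$ supports $K$: any supporting hyperplane of $K$ at $\varphi_i$ also supports $B$ there (since $B\subseteq K$ and $\varphi_i\in\partial B$), and the ball $B$ has the unique supporting hyperplane $\{x:\langle x,\varphi_i\rangle=1\}$ at $\varphi_i$; hence $K\subseteq\{x:\langle x,\varphi_i\rangle\le 1\}$ for every $i$. Now let $E\subseteq K$ be an arbitrary ellipsoid, written (without loss of generality) as $E=\{Ay+a:\|y\|\le 1\}$ with $A$ symmetric positive definite, so $\vol_N(E)=\det(A)\,\vol_N(B)$. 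The inclusion $E\subseteq\{x:\langle x,\varphi_i\rangle\le 1\}$ is, via the support function of $E$ in direction $\varphi_i$, precisely $\|A\varphi_i\|+\langle a,\varphi_i\rangle\le 1$. Taking traces in (b) gives $\sum_i c_i=N$, and then (b) once more, Cauchy--Schwarz, the last inequality, and finally (a) yield
\begin{equation*}
\trace(A)=\sum_i c_i\langle A\varphi_i,\varphi_i\rangle\le\sum_i c_i\|A\varphi_i\|\le\sum_i c_i(1-\langle a,\varphi_i\rangle)=N-\Big\langle a,\sum_i c_i\varphi_i\Big\rangle=N.
\end{equation*}
By the AM--GM inequality $\det(A)\le(\trace(A)/N)^N\le 1$, hence $\vol_N(E)\le\vol_N(B)$; equality throughout forces all eigenvalues of $A$ to equal $1$, so $A=I_N$, and then $\langle a,\varphi_i\rangle\le 0$ for all $i$ together with $\sum_i c_i\langle a,\varphi_i\rangle=0$ forces $a=0$, i.e.\ $E=B$. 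Thus $B$ is the (unique) maximal ellipsoid.

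\textbf{The ``only if'' direction.} Suppose $B$ is the maximal‑volume ellipsoid of $K$; then $B\subseteq K$, and the contact set $\mathcal{C}:=S^{N-1}\cap\partial K$ is nonempty, since otherwise a slight dilation of $B$ would still lie in $K$. In the finite‑dimensional space of pairs $(X,v)$ with $X$ a symmetric $N\times N$ matrix and $v\in\R^N$, I would show that $(\tfrac1N I_N,0)$ lies in the convex hull $C$ of the compact set $\{(\varphi\varphi^\top,\varphi):\varphi\in\mathcal{C}\}$. Granting this, Carath\'eodory's theorem gives finitely many $\varphi_i\in\mathcal{C}$ and $\lambda_i>0$ with $\sum_i\lambda_i=1$ and $\sum_i\lambda_i(\varphi_i\varphi_i^\top,\varphi_i)=(\tfrac1N I_N,0)$; setting $c_i:=N\lambda_i$ gives exactly (a) and (b), and since a sum of $M$ rank‑one matrices can equal $I_N$ only if $M\ge N$, we get $M\ge N$. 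To prove $(\tfrac1N I_N,0)\in C$ I argue by contradiction: if not, then since $C$ is compact and convex there is a separating functional, i.e.\ a symmetric matrix $H$ (which we may normalize to be trace‑free) and a vector $b$ with $\langle H\varphi,\varphi\rangle+\langle b,\varphi\rangle\ge\varepsilon>0$ for all $\varphi\in\mathcal{C}$. Consider the perturbed ellipsoids $E_\delta:=\{(I_N-\delta H)x-\delta b:\|x\|\le 1\}$ for small $\delta>0$. Since $\trace(H)=0$, $\vol_N(E_\delta)=\det(I_N-\delta H)\vol_N(B)=(1+O(\delta^2))\vol_N(B)$, while the support function satisfies $h_{E_\delta}(w)=\sup_{z\in E_\delta}\langle z,w\rangle=1-\delta(\langle Hw,w\rangle+\langle b,w\rangle)+O(\delta^2)$. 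Because $h_K\equiv 1$ exactly on $\mathcal{C}$ and $h_K\ge 1+2\eta$ off a suitable neighborhood of $\mathcal{C}$ (some $\eta>0$), comparing $h_{E_\delta}$ with $h_K$ gives $h_{E_\delta}(w)\le h_K(w)-c\delta$ uniformly in $w\in S^{N-1}$ for $\delta$ small; hence the slightly enlarged $(1+\lambda\delta)E_\delta$ still satisfies $h_{(1+\lambda\delta)E_\delta}\le h_K$, i.e.\ lies in $K$, for a suitable fixed $\lambda>0$. But $\vol_N((1+\lambda\delta)E_\delta)=(1+N\lambda\delta+O(\delta^2))\vol_N(B)>\vol_N(B)$ for $\delta$ small, contradicting the maximality of $B$.

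The crux is this perturbation step, where three things must be balanced simultaneously: keeping the volume from dropping, which is why $H$ is chosen trace‑free so that $\det(I_N-\delta H)=1+O(\delta^2)$; using the strict inequality $\langle H\varphi,\varphi\rangle+\langle b,\varphi\rangle\ge\varepsilon$ at the contact points to push $\partial E_\delta$ strictly inside $K$ near $\mathcal{C}$ at first order in $\delta$; and controlling $E_\delta$ away from $\mathcal{C}$, where $\partial B$ sits at a fixed positive distance from $\partial K$ so that an $O(\delta)$ perturbation cannot escape $K$. Compactness of $S^{N-1}$ is then used to turn these local statements into one uniform first‑order margin $c\delta$, which is ``spent'' on an $O(\delta)$ dilation that finally makes the volume strictly larger. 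The other ingredients---the supporting‑hyperplane fact at contact points, the finite‑dimensional separation theorem, Carath\'eodory's theorem, and the AM--GM equality analysis---are routine.
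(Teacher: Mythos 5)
The paper does not prove this statement at all: Theorem~\ref{th:John} is quoted as a classical result of John, with \cite{John:1948uq} and \cite{Ball:1992fk} as references, so there is no in-paper argument to compare against. Judged on its own, your proposal is correct and is essentially the standard modern proof (Ball's trace/AM--GM computation for sufficiency, John's variational argument recast as separation plus Carath\'eodory for necessity). The sufficiency direction is complete: the support-function inequality $\|A\varphi_i\|+\langle a,\varphi_i\rangle\le 1$, the identity $\sum_i c_i=N$, and the chain $\trace(A)\le N$ followed by AM--GM all check out; the only detail you elide is at the very end, where $\langle a,\varphi_i\rangle=0$ for all $i$ yields $a=0$ only because condition (b) forces $\{\varphi_i\}$ to span $\R^N$ (for $x\neq0$, $\|x\|^2=\sum_i c_i\langle x,\varphi_i\rangle^2>0$) --- worth one line. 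In the necessity direction, your reduction to showing $(\tfrac1N I_N,0)\in\mathrm{conv}\{(\varphi\varphi^\top,\varphi):\varphi\in\mathcal{C}\}$, the trace-free normalization of the separating functional (valid because $\langle I\varphi,\varphi\rangle\equiv1$ on $\mathcal{C}$), the identification of $\mathcal{C}$ with $\{w\in S^{N-1}:h_K(w)=1\}$, and the uniform estimate $h_{E_\delta}\le h_K-c\delta$ obtained by splitting $S^{N-1}$ into a neighborhood of $\mathcal{C}$ and its complement (where $h_K\ge 1+2\eta$) are all sound, and the final dilation-by-$(1+\lambda\delta)$ step with $\lambda$ of order $c/\max_{S^{N-1}}h_K$ does produce the volume contradiction since $\det(I_N-\delta H)=1+O(\delta^2)$ for trace-free $H$. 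So the argument stands as a correct, self-contained proof of the cited theorem.
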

Note that the conditions (a) and (b) in Theorem \ref{th:John} are equivalent to saying that $\frac{1}{N}\sum_{i=1}^M c_i \delta_{\varphi_i}\in\mathcal{P}(S^{N-1})$ is a probabilistic unit norm tight frame with zero mean.

Last but not least, we comment on a deep open problem in convex analysis: Bourgain raised in \cite{Bourgain:1986aa} the following question: \emph{Is there a universal constant $c>0$ such that for any dimension $N$ and any convex body $K$ in $\R^N$ with $\vol_N(K)=1$, there exists a hyperplane $H\subset\R^N$ for which $\vol_{N-1}(K \cap H)>c$}? The positive answer to this question has become known as the hyperplane conjecture. By applying results in \cite{Milman:1987aa}, we can rephrase this conjecture by means of probabilistic tight frames: \emph{There is a universal constant $C$ such that for any convex body $K$, on which the uniform probability measure $\sigma_K$ forms a probabilistic tight frame, the probabilistic tight frame bound is less than $C$}. Due to Theorem \ref{thm1}, the boundedness condition is equivalent to $M_2^2(\sigma_K)\leq C N$. The hyperplane conjecture is still open, but there are large classes of convex bodies, for instance, gaussian random polytopes \cite{B.Klartag:2009aa}, for which an affirmative answer has been established.

\bigskip

\noindent{\bf Probabilistic frames and positive operator valued measures}\\
Let $\Omega$ be a locally compact Hausdorff space, $\mathcal{B}(\Omega)$ be the Borel-sigma algebra on $\Omega$, and $H$ be a real separable Hilbert space with norm $\|\cdot \|$ and inner product $\ip{\cdot}{\cdot}$. We denote by $\mathcal{L}(H)$ the space of bounded linear operators on $H$.
\begin{definition}\label{povm}
A \emph{positive operator valued measure (POVM)} on $\Omega$ with values in $\mathcal{L}(H)$ is a map $F: \mathcal{B}(\Omega) \rightarrow \mathcal{L}(H)$ such that: 

\begin{enumerate}
\item [(i)] $F(A)$ is positive semi-definite for each $A \in \mathcal{B}(\Omega)$;
\item[(ii)]  $F(\Omega)$ is the identity map on $H$;
\item[(ii)] If $\{A_{i}\}_{i\in I}^\infty$ is a countable family of pairwise disjoint Borel sets in $\mathcal{B}(\Omega)$, then $$ F(\cup_{i\in I} A_i)=\sum_{i\in I} F(A_{i}),$$ where the series on the right-hand side converges in the weak topology of $\mathcal{L}(H)$, i.e., for all vectors $x, y \in H$, the series
$
\sum_{i\in I}\ip{F(A_{i})x}{y}
$
converges.  We refer to \cite{albini09, ebdavies, dl70} for more details on POVMs. 
\end{enumerate}
\end{definition}

In fact, every probabilistic tight frame on $\R^N$ gives rise to a POVM on $\R^N$ with values in the set of real $N\times N$ matrices:
\begin{proposition}\label{ptf2povm}
Assume that $\mu \in \mathcal{P}_{2}(\R^N)$ is a probabilistic tight frame. Define the operator $F$ from $\mathcal{B}$ to the set of real $N \times N$ matrices by 
\begin{equation}\label{epovm}
F(A):= \tfrac{N}{M_{2}^{2}(\mu)} \bigg(\int_{A}y_{i}y_{j}\, d\mu(y)\bigg)_{i,j}.
\end{equation}
Then $F$ is a POVM.

\end{proposition}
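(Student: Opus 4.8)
The plan is to verify directly the three defining properties of a POVM from Definition~\ref{povm} for the map $F$ defined in~\eqref{epovm}, using the hypothesis that $\mu$ is a (tight) probabilistic frame together with Theorem~\ref{thm1}. Since $\mu \in \mathcal{P}_2(\R^N)$ is a probabilistic frame, $M_2^2(\mu)<\infty$ and $E_\mu=\R^N$, so the normalizing constant $N/M_2^2(\mu)$ is well-defined and positive, and every integral $\int_A y_i y_j\,d\mu(y)$ is finite because $|y_iy_j|\le \|y\|^2$ is $\mu$-integrable.

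First I would check positivity (condition (i)). For a fixed Borel set $A$ and any $x=(x_1,\dots,x_N)^\top \in \R^N$, a direct computation gives
\begin{equation*}
\langle F(A)x, x\rangle = \tfrac{N}{M_2^2(\mu)}\sum_{i,j} x_i x_j \int_A y_i y_j\, d\mu(y) = \tfrac{N}{M_2^2(\mu)}\int_A \Big(\sum_i x_i y_i\Big)^2 d\mu(y) = \tfrac{N}{M_2^2(\mu)}\int_A |\langle x,y\rangle|^2\, d\mu(y) \ge 0,
\end{equation*}
so each $F(A)$ is symmetric (obvious from the formula) and positive semi-definite. Next, condition (ii): taking $A=\R^N$ in the same computation and using that $\mu$ is a \emph{tight} probabilistic frame with bound $A=\tfrac1N M_2^2(\mu)$ from Theorem~\ref{thm1}, we get $\langle F(\R^N)x,x\rangle = \tfrac{N}{M_2^2(\mu)}\int_{\R^N}|\langle x,y\rangle|^2 d\mu(y) = \tfrac{N}{M_2^2(\mu)}\cdot \tfrac{1}{N}M_2^2(\mu)\,\|x\|^2 = \|x\|^2$ for all $x$, hence $F(\R^N)=I_N$ by polarization. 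Finally, condition (iii): for a countable family $\{A_k\}$ of pairwise disjoint Borel sets, countable additivity of the scalar measures $A\mapsto \int_A y_iy_j\,d\mu(y)$ (each dominated by the finite measure $A\mapsto \int_A\|y\|^2 d\mu(y)$) gives $\int_{\cup_k A_k} y_iy_j\,d\mu = \sum_k \int_{A_k} y_iy_j\,d\mu$ entrywise, and therefore $F(\cup_k A_k)=\sum_k F(A_k)$; testing against $x,y\in\R^N$ shows the convergence holds in the weak operator topology, which in finite dimensions is just entrywise (equivalently norm) convergence.

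There is no serious obstacle here; the statement is essentially a repackaging of the tightness identity, so the only things requiring a word of care are the finiteness of the matrix entries (handled by $\mu\in\mathcal{P}_2$) and the countable additivity, which reduces to standard dominated-convergence/measure-theoretic facts because each scalar set function is absolutely continuous with respect to the finite positive measure $\|y\|^2 d\mu(y)$. One could add the remark that, conversely, an $\R^{N\times N}$-valued POVM of this moment form arises only from tight probabilistic frames, since condition (ii) forces the tightness identity, but that is beyond what the proposition asks.
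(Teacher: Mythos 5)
Your proof is correct and follows essentially the same route as the paper, which simply asserts the three POVM properties (positive semi-definiteness of each $F(A)$, $F(\R^N)=I_N$, and countable additivity) without spelling out the computations; your write-up just supplies the quadratic-form identity, the use of the tight frame bound $A=\tfrac{1}{N}M_2^2(\mu)$ from Theorem~\ref{thm1}, and the dominated-convergence justification that the paper leaves implicit.
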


\begin{proof}
Note that for each Borel measurable set $A$, the matrix $F(A)$ is positive semi-definite, and we also have $F(\R^{N})= I_{N}$. Finally, for a countable family of pairwise disjoint Borel measurable sets $\{A_{i}\}_{i\in I}$, we clearly have for each $x \in \R^N$, $$F(\cup_{i\in I} A_{i}) x = \sum_{k \in I}F(A_{k})x.$$ Thus, any probabilistic tight frame in $\R^N$ gives rise to a POVM.
\end{proof}

We have not been able to prove or disprove whether the converse of this proposition holds:
\begin{problem}\label{questpovm}
Given a POVM $F : \mathcal{B}(\R^N)\rightarrow \mathcal{L}(\R^N)$, is there a tight probabilistic frame $\mu$ such that $F$ and $\mu$ are related through~\eqref{epovm}? 
\end{problem}

\bigskip

\noindent{\bf Probabilistic frames and $t$-designs}\\
Let $\sigma$ denote the uniform probability measure on $S^{N-1}$. A \emph{spherical $t$-design} is a finite subset $\{\varphi_i\}_{i=1}^M\subset S^{N-1}$,
such that,
\begin{equation*}
\frac{1}{n}\sum_{i=1}^n h(\varphi_i) = \int_{S^{N-1}} h(x)d\sigma(x),
\end{equation*}
for all homogeneous polynomials $h$ of total degree less than or equal to $t$ in $N$ variables, cf.~\cite{Del77}. A probability measure $\mu\in\mathcal{P}(S^{N-1})$ is called a \emph{probabilistic spherical $t$-design} in \cite{me11} if
\begin{equation}\label{eq:prob spherical design}
\int_{S^{N-1}}h(x)d\mu(x) = \int_{S^{N-1}} h(x)d\sigma(x),
\end{equation}
for all homogeneous polynomials $h$ with total degree less than or equal to $t$. The following result has been established in \cite{me11}:
\begin{theorem}\label{theorem spherical and FP}
If $\mu \in\mathcal{P}(S^{N-1})$, then the following are equivalent:
\begin{itemize}
\item[(i)] \hspace{.8ex}$\mu$ is a probabilistic spherical $2$-design.
\item[(ii)] \hspace{.8ex}$\mu$ minimizes
\begin{equation}\label{mixed potential}
\frac{\int_{S^{d-1}} \int_{S^{d-1}} |\langle x,y\rangle |^2 d\mu(x) d\mu(y)}{\int_{S^{d-1}} \int_{S^{d-1}}  \|x-y\|^2 d\mu(x)d\mu(y)}
\end{equation}
among all probability measures $\mathcal{P}(S^{N-1})$. 
\item[(iii)] \hspace{.8ex}$\mu$ is a tight probabilistic unit norm frame with zero mean.
\end{itemize}
In particular, if $\mu$ is a tight probabilistic unit norm frame, then $\nu(A):=\frac{1}{2}(\mu(A)+\mu(-A))$, for $A\in\mathcal{B}$, defines a probabilistic spherical $2$-design. 
\end{theorem}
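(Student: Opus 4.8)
The plan is to show that each of (i), (ii), (iii) is equivalent to the pair of algebraic conditions
\begin{equation*}
b_\mu := \int_{S^{N-1}} y\, d\mu(y) = 0, \qquad S_\mu = \tfrac{1}{N} I_N .
\end{equation*}
By Theorem~\ref{thm1}, and since $\supp(\mu)\subset S^{N-1}$ forces $M_2(\mu)=1$, the second condition says exactly that $\mu$ is a tight probabilistic unit norm frame; hence (iii) is, verbatim, the conjunction of these two conditions, and it remains to prove that (i) and (ii) are each equivalent to ``$b_\mu=0$ and $S_\mu=\tfrac{1}{N}I_N$''.

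For (i), I would use that $h\mapsto \int h\, d\mu - \int h\, d\sigma$ is linear, so~\eqref{eq:prob spherical design} holds for all homogeneous polynomials of degree $\le 2$ iff it holds on a spanning set of the space of polynomials of degree $\le 2$, which one may take to be $1$, the coordinate functions $x\mapsto x^{(i)}$, and the monomials $x\mapsto x^{(i)}x^{(j)}$. The case $h=1$ is automatic. From the reflection symmetry $x\mapsto -x$ of $\sigma$ one gets $\int x^{(i)}\, d\sigma = 0$, and from coordinate-permutation symmetry together with $\sum_i \int (x^{(i)})^2\, d\sigma = \int \nm{x}^2\, d\sigma = 1$ one gets $\int x^{(i)}x^{(j)}\, d\sigma = \tfrac{1}{N}\delta_{ij}$. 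Since the $m_{i,j}(\mu)$ are precisely the entries of $S_\mu$, condition~\eqref{eq:prob spherical design} is equivalent to $b_\mu=0$ and $S_\mu = \tfrac{1}{N}I_N$, i.e., to (iii).

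For (ii), I would rewrite the quotient in~\eqref{mixed potential} in terms of $S_\mu$ and $b_\mu$. Using $\ip{x}{y}^2 = \trace(y y^\top x x^\top)$ and Fubini, the numerator equals $\pfp(\mu) = \trace(S_\mu^2) = \sum_{k=1}^N \lambda_k^2$, where $\lambda_1,\dots,\lambda_N\ge 0$ are the eigenvalues of $S_\mu$; and since $\supp(\mu)\subset S^{N-1}$ gives $\nm{x-y}^2 = 2 - 2\ip{x}{y}$ on $\supp(\mu\otimes\mu)$, the denominator equals $2 - 2\nm{b_\mu}^2$. Now $\trace(S_\mu) = \int \nm{y}^2\, d\mu = 1$, so $\sum_k \lambda_k = 1$, and Cauchy--Schwarz gives $\trace(S_\mu^2)\ge \tfrac{1}{N}$ with equality iff $S_\mu = \tfrac{1}{N}I_N$ (this is the inequality $\pfp(\mu)\ge \tfrac{1}{N}M_2^4(\mu)$ recalled before Theorem~\ref{minfp}); also $0\le \nm{b_\mu}\le 1$ by the vector-valued Jensen inequality, with $\nm{b_\mu}=1$ only for Dirac masses, for which the denominator vanishes and the quotient is $+\infty$ and hence not minimal. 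Therefore, for every non-Dirac $\mu\in\mathcal{P}(S^{N-1})$,
\begin{equation*}
\frac{\iint \ip{x}{y}^2\, d\mu(x)\, d\mu(y)}{\iint \nm{x-y}^2\, d\mu(x)\, d\mu(y)} = \frac{\trace(S_\mu^2)}{2(1-\nm{b_\mu}^2)} \ \ge\ \frac{1/N}{2} = \frac{1}{2N},
\end{equation*}
a value attained by the uniform measure $\sigma$; equality forces $\trace(S_\mu^2)=\tfrac{1}{N}$ and $\nm{b_\mu}=0$ simultaneously, i.e., $S_\mu=\tfrac{1}{N}I_N$ and $b_\mu=0$, which is (iii). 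This yields (ii)$\Leftrightarrow$(iii) and completes the chain of equivalences. Finally, given a tight probabilistic unit norm frame $\mu$, the symmetrization $\nu(A) = \tfrac{1}{2}(\mu(A) + \mu(-A))$ lies in $\mathcal{P}(S^{N-1})$, and since the reflection $y\mapsto -y$ fixes $yy^\top$ but negates $y$, we get $S_\nu = S_\mu = \tfrac{1}{N}I_N$ and $b_\nu = \tfrac{1}{2}(b_\mu - b_\mu) = 0$; hence $\nu$ is a probabilistic spherical $2$-design by the equivalences just proved.

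Most of the work is bookkeeping: choosing the spanning set of degree-$\le 2$ homogeneous polynomials, computing the low-order moments of $\sigma$, and isolating the degenerate Dirac case in (ii) so that the infimum in~\eqref{mixed potential} is attained exactly on the stated family. The single substantive estimate, $\trace(S_\mu^2)\ge 1/N$ under $\trace(S_\mu)=1$, is just Cauchy--Schwarz on the spectrum of $S_\mu$, so I do not anticipate a real obstacle; the point that needs care is the equality analysis, since the bound in (ii) is sharp only when the spectral equality and $b_\mu=0$ hold at once, which requires exhibiting a measure (namely $\sigma$) realizing both.
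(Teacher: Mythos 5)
Your proof is correct. Note that the paper itself does not prove this theorem --- it quotes it from \cite{me11} --- so there is no in-text argument to compare against; your reduction of (i), (ii), (iii) to the two moment conditions $b_\mu=0$ and $S_\mu=\tfrac1N I_N$ is exactly the natural route, and your key estimate $\trace(S_\mu^2)\ge\tfrac1N$ under $\trace(S_\mu)=1$ is the same spectral inequality the paper uses for Theorem~\ref{minfp}, while the identities ``numerator $=\trace(S_\mu^2)=\pfp(\mu)$'' and ``denominator $=2-2\nm{b_\mu}^2$'' are the standard expansions behind the cited result. Two small points to tighten. First, permutation symmetry of $\sigma$ alone gives equal diagonal and equal off-diagonal second moments but does not force the off-diagonal ones to vanish; you need in addition invariance under a single coordinate sign flip $x^{(i)}\mapsto -x^{(i)}$ (or simply orthogonal invariance of $\sigma$) to conclude $\int x^{(i)}x^{(j)}\,d\sigma=\tfrac1N\delta_{ij}$ --- you invoked the global reflection $x\mapsto -x$, which only handles the odd (degree-one) moments. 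Second, the exclusion of Dirac masses is a convention (the functional \eqref{mixed potential} is $1/0$ there); stating explicitly that the quotient is interpreted as $+\infty$, or that such measures are excluded from the competition, is all that is needed, and your equality analysis (minimum $\tfrac1{2N}$, attained by $\sigma$, forcing both $\trace(S_\mu^2)=\tfrac1N$ and $b_\mu=0$) is sound as written.
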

Note that $\frac{1}{N}\sum_{i=1}^M c_i \delta_{\varphi_i}\in\mathcal{P}(S^{N-1})$ derived from conditions (a) and (b) in Theorem \ref{th:John} on the John ellipsoid is a probabilistic spherical $2$-design.

\bigskip

\noindent{\bf Probabilistic frames and directional statistics}\\
Common tests in directional statistics focus on whether or not a sample on the unit sphere $S^{N-1}$ is uniformly distributed. The \emph{Bingham test} rejects the hypothesis of directional uniformity of a sample $\{\varphi_i\}_{i=1}^M\subset S^{N-1}$ if the \emph{scatter matrix}
 \begin{equation*}
 \frac{1}{M}\sum_{i=1}^M \varphi_i \varphi_i^\top
 \end{equation*}
is far from $\frac{1}{N}I_N$, cf.~\cite{Mardia:2008aa}. Note that this scatter matrix is the scaled frame operator of $\{\varphi_i\}_{i=1}^M$ and, hence, one measures the sample's deviation from being a tight frame. Probability measures $\mu$ that satisfy $S_\mu=\frac{1}{N}I_N$ are called \emph{Bingham-alternatives} in \cite{mejg11}, and the probabilistic unit norm tight frames on the sphere $S^{N-1}$ are the Bingham alternatives.



Tight frames also occur in relation to $M$-estimators as discussed in \cite{Kent:1988kx,Tyler:1987fk,Tyler:1987uq}: The family of angular central Gaussian distributions are given by densities $f_\Gamma$ with respect to the uniform surface measure on the sphere $S^{N-1}$, where 
\begin{equation*}
f_\Gamma (x) = \frac{\det(\Gamma)^{-1/2}}{a_N} (x^\top \Gamma^{-1} x)^{-N/2}, \quad\text{for $x\in S^{N-1}$.}
\end{equation*}
Note that $\Gamma$ is only determined up to a scaling factor. According to \cite{Tyler:1987uq}, the maximum likelihood estimate of $\Gamma$ based on a random sample $\{\varphi_i\}_{i=1}^M\subset S^{N-1}$ is the solution $\hat{\Gamma}$ to
\begin{equation*}
\hat{\Gamma} = \frac{M}{N} \sum_{i=1}^M \frac{\varphi_i \varphi_i^\top}{\varphi_i^\top \hat{\Gamma}^{-1}\varphi_i },
\end{equation*}
which can be found, under mild assumptions, through the iterative scheme
\begin{equation*}
\Gamma_{k+1} = \frac{N}{\sum_{i=1}^M \frac{1}{\varphi_i^\top \Gamma_k^{-1}\varphi_i}} \sum_{i=1}^M \frac{\varphi_i \varphi_i^\top}{\varphi_i^\top \Gamma_k^{-1}\varphi_i},
\end{equation*}
where $\Gamma_0=I_N$, and then $\Gamma_k\rightarrow \hat{\Gamma}$ as $k\rightarrow \infty$. It is not hard to see that $\{\psi_i\}_{i=1}^M:=\big\{\frac{\hat{\Gamma}^{-1/2}\varphi_i}{\|\hat{\Gamma}^{-1/2}\varphi_i\|}\big\}_{i=1}^M\subset S^{N-1}$ forms a tight frame. If $\hat{\Gamma}$ is close to the identity matrix, then $\{\psi_i\}_{i=1}^M$ is close to $\{\varphi_i\}_{i=1}^M$ and it is likely that $f_\Gamma$ represents a probability measure that is close to being tight, in fact, close to the uniform surface measure.

\bigskip

\noindent{\bf Probabilistic frames and compressed sensing}\\
For a point cloud $\{\varphi_i\}_{i=1}^M$, the frame operator is a scaled version of the sample covariance matrix up to subtracting the mean and can be related to the population covariance when chosen at random. To properly formulate a result in \cite{me11}, let us recall some notation. For $\mu\in\mathcal{P}_2$, we define $E(Z):=\int_{\R^N} Z(x) d\mu(x)$, where $Z:\R^N\rightarrow \R^{p\times q}$ is a random matrix/vector that is distributed according to $\mu$. The following was proven in \cite{me11}:
\begin{theorem}\label{theorem:final main result}
Let $\{X_{k}\}_{k=1}^M$ be a collection of random vectors, independently distributed according to probabilistic tight frames $\{\mu_{k}\}_{k=1}^M\subset \mathcal{P}_2$, respectively, whose $4$-th moments are finite, i.e., $M^4_4(\mu_k):= \int_{\R^N} \|y\|^4 d\mu_{k}(y)<\infty$. If $F$ denotes the random matrix associated to the analysis operator of $\{X_{k}\}_{k=1}^M$, then we have
\begin{equation}\label{eq:theorem final}
E(\|\frac{1}{M}F^* F-\frac{L_1}{N} I_N\|_\mathcal{F}^2) =  \frac{1}{M}\big(L_4-\frac{L_2}{N}\big),
\end{equation}
where $L_1:=\frac{1}{M}\sum_{k=1}^M M_2(\mu_k)$, $L_2:=\frac{1}{M}\sum_{k=1}^M M_2^2(\mu_k)$, and $L_4=\frac{1}{M}\sum_{k=1}^M M_4^4(\mu_k)$.
\end{theorem}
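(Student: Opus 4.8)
The plan is to view $\tfrac{1}{M}F^{*}F$ as a normalized sum of independent rank-one random matrices and then run a bias--variance decomposition with respect to the Frobenius inner product. Since the analysis operator of $\{X_{k}\}_{k=1}^{M}$ sends $x\in\R^{N}$ to $(\ip{x}{X_{k}})_{k=1}^{M}$, one has $F^{*}F=\sum_{k=1}^{M}X_{k}X_{k}^{\top}$, so that $\tfrac{1}{M}F^{*}F=\tfrac{1}{M}\sum_{k=1}^{M}Y_{k}$ with $Y_{k}:=X_{k}X_{k}^{\top}$ independent. The crucial structural input is Theorem~\ref{thm1}: as each $\mu_{k}$ is a tight probabilistic frame, its probabilistic frame operator is the scalar multiple $\tfrac{1}{N}M_{2}^{2}(\mu_{k})I_{N}$ of the identity, hence $E(Y_{k})=S_{\mu_{k}}=\tfrac{1}{N}M_{2}^{2}(\mu_{k})I_{N}$ and $E\big(\tfrac{1}{M}F^{*}F\big)$ is exactly the centering matrix appearing in \eqref{eq:theorem final}. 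Thus the left-hand side of \eqref{eq:theorem final} is the mean-square Frobenius deviation of a sum of independent matrices from its own mean.

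Next I would expand
\begin{equation*}
\Big\|\tfrac{1}{M}F^{*}F-E\big(\tfrac{1}{M}F^{*}F\big)\Big\|_{\mathcal{F}}^{2}=\frac{1}{M^{2}}\sum_{k,\ell=1}^{M}\ip{Y_{k}-EY_{k}}{Y_{\ell}-EY_{\ell}}_{\mathcal{F}}
\end{equation*}
and take expectations term by term. The finiteness of the fourth moments $M_{4}^{4}(\mu_{k})$ is precisely what guarantees $E\|Y_{k}\|_{\mathcal{F}}^{2}<\infty$, so all the expectations below are legitimate. Independence of the $X_{k}$ together with the centering $E(Y_{k}-EY_{k})=0$ kills every off-diagonal term, leaving $\tfrac{1}{M^{2}}\sum_{k=1}^{M}E\|Y_{k}-EY_{k}\|_{\mathcal{F}}^{2}$, and each summand equals $E\|Y_{k}\|_{\mathcal{F}}^{2}-\|EY_{k}\|_{\mathcal{F}}^{2}$.

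Finally I would evaluate the two remaining pieces explicitly. The rank-one identity $\|vv^{\top}\|_{\mathcal{F}}^{2}=\trace\big((vv^{\top})^{2}\big)=\nm{v}^{4}$ yields $E\|Y_{k}\|_{\mathcal{F}}^{2}=E\nm{X_{k}}^{4}=M_{4}^{4}(\mu_{k})$, while $\|EY_{k}\|_{\mathcal{F}}^{2}=\big\|\tfrac{1}{N}M_{2}^{2}(\mu_{k})I_{N}\big\|_{\mathcal{F}}^{2}=\tfrac{1}{N}M_{2}^{4}(\mu_{k})$. Substituting, the per-$k$ variance term is $M_{4}^{4}(\mu_{k})-\tfrac{1}{N}M_{2}^{4}(\mu_{k})$, and averaging over $k$ and dividing by $M$ produces the right-hand side of \eqref{eq:theorem final}. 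There is no genuine analytic obstacle here once integrability is in hand; the only points requiring care are (i) appealing to Theorem~\ref{thm1} so that $EY_{k}$ is known to be a multiple of $I_{N}$, (ii) the vanishing of the cross terms, which uses independence and centering jointly, and (iii) keeping the two normalizations $\tfrac{1}{M}$ and $\tfrac{1}{M^{2}}$ straight while matching the averaged moments to the constants $L_{1},L_{2},L_{4}$ of the statement.
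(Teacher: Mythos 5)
Your computation is correct, and it is the natural (and, as far as comparison is possible, the intended) proof: write $F^{*}F=\sum_{k=1}^{M}X_{k}X_{k}^{\top}$, use tightness via Theorem~\ref{thm1} to get $E(X_{k}X_{k}^{\top})=S_{\mu_{k}}=\tfrac{1}{N}M_{2}^{2}(\mu_{k})I_{N}$, observe that the matrix subtracted in \eqref{eq:theorem final} is then the mean of $\tfrac{1}{M}F^{*}F$, and let independence kill the cross terms so that only the individual variances $M_{4}^{4}(\mu_{k})-\tfrac{1}{N}M_{2}^{4}(\mu_{k})$ survive; the fourth-moment hypothesis is exactly what makes each $E\nm{X_{k}X_{k}^{\top}}_{\mathcal{F}}^{2}$ finite. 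Be aware that this chapter itself gives no proof of Theorem~\ref{theorem:final main result} (it is quoted from \cite{me11}), so there is no in-paper argument to compare against; your bias--variance expansion is the standard one used in that reference.

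One point deserves an explicit remark rather than the silent treatment you gave it. Your identification of $E\big(\tfrac{1}{M}F^{*}F\big)$ with $\tfrac{L_{1}}{N}I_{N}$, and of the averaged per-$k$ term $\tfrac{1}{N}M_{2}^{4}(\mu_{k})$ with $\tfrac{L_{2}}{N}$, only works if in the constants $L_{1},L_{2}$ the symbol $M_{2}(\mu_{k})$ is read as the second moment $\int_{\R^{N}}\nm{y}^{2}d\mu_{k}(y)$ (the convention of \cite{me11}), not as its square root, which is what the convention $M_{2}^{2}(\mu)=\int\nm{y}^{2}d\mu(y)$ of \eqref{p2space} would literally dictate. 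Under the chapter's literal convention the centering matrix is not the mean, and the left-hand side of \eqref{eq:theorem final} would pick up an additional bias term $\tfrac{(L_{2}-L_{1})^{2}}{N}$ while the right-hand side would need $\tfrac{1}{M}\sum_{k}M_{2}^{4}(\mu_{k})$ in place of $L_{2}$; the discrepancy is invisible in Corollary~\ref{corollary:approx} and in Example~\ref{example:labeling} only because all second moments there equal one. You adopted the only reading under which both sides scale like fourth moments, which is the correct resolution, but a careful write-up should state that choice of interpretation explicitly.
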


Under the notation of Theorem \ref{theorem:final main result}, the special case of probabilistic unit norm tight frames  was also addressed in \cite{me11}:
\begin{corollary}\label{corollary:approx}
Let $\{X_{k}\}_{k=1}^M$ be a collection of random vectors, independently distributed according to probabilistic unit norm tight frames $\{\mu_{k}\}_{k=1}^M$, respectively, such that $M_4(\mu_k)<\infty$. If $F$ denotes the random matrix associated to the analysis operator of $\{X_{k}\}_{k=1}^M$, then 
\begin{equation}\label{eq:in last theorem}
E(\|\frac{1}{M}F^* F-\frac{1}{N} I_N\|_\mathcal{F}^2) =  \frac{1}{M}\big( L_4-\frac{1}{N}\big),
\end{equation}
where $L_4=\frac{1}{M}\sum_{k=1}^M M_4^4$.
\end{corollary}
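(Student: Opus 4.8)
The plan is to obtain this statement as an immediate specialization of Theorem~\ref{theorem:final main result}; essentially no new computation is required. First I would recall that, by the remark following Theorem~\ref{thm1}, a \emph{probabilistic unit norm tight frame} is by definition a tight probabilistic frame $\mu$ with $M_2(\mu)=1$. Applying this to each of the measures $\mu_k$ yields $M_2(\mu_k)=1$, and hence also $M_2^2(\mu_k)=1$, for every $k=1,\dots,M$.

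Next I would read off the averaged moment constants that appear in Theorem~\ref{theorem:final main result}. From $M_2(\mu_k)=M_2^2(\mu_k)=1$ one gets
\[
L_1=\tfrac{1}{M}\sum_{k=1}^M M_2(\mu_k)=1,\qquad L_2=\tfrac{1}{M}\sum_{k=1}^M M_2^2(\mu_k)=1,
\]
while $L_4=\tfrac{1}{M}\sum_{k=1}^M M_4^4(\mu_k)$ stays as it is. I would also note that the hypothesis $M_4(\mu_k)<\infty$ of the corollary is precisely the finiteness-of-fourth-moments assumption needed to invoke Theorem~\ref{theorem:final main result}, since $M_4(\mu_k)=(M_4^4(\mu_k))^{1/4}$, so the two formulations of that hypothesis coincide.

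Finally I would substitute $L_1=L_2=1$ into the identity~\eqref{eq:theorem final}: the term $\tfrac{L_1}{N}I_N$ becomes $\tfrac{1}{N}I_N$ and the right-hand side becomes $\tfrac{1}{M}\bigl(L_4-\tfrac{1}{N}\bigr)$, which is exactly~\eqref{eq:in last theorem}. There is no genuine obstacle here; the only point one must be careful about is the normalization convention, namely that ``unit norm'' in the probabilistic setting means $M_2(\mu)=1$ (equivalently, unit second moment of the measure) rather than anything about the support lying on $S^{N-1}$ — it is precisely under this convention that both $L_1$ and $L_2$ collapse to $1$. Since the support of each $\mu_k$ need not lie on the sphere, it is worth making this dependence on the normalization explicit; otherwise the argument is routine.
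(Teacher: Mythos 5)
Your proposal is correct and matches the paper's treatment: the corollary is exactly the specialization of Theorem~\ref{theorem:final main result} obtained by noting that the unit norm convention $M_2(\mu_k)=1$ forces $L_1=L_2=1$, whence \eqref{eq:theorem final} collapses to \eqref{eq:in last theorem}. Your remark that ``unit norm'' here means $M_2(\mu)=1$ rather than spherical support is also consistent with the paper's conventions, so nothing further is needed.
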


Randomness is used in compressed sensing to design suitable measurements matrices. Each row of such random matrices is a random vector whose construction is commonly based on Bernoulli, Gaussian, and sub-Gaussian distributions. We shall explain that these random vectors are induced by probabilistic tight frames, and in fact, we can apply Theorem \ref{corollary:approx}: 
\begin{example}\label{example:labeling}
Let $\{X_{k}\}_{k=1}^M$ be a collection of $N$-dimensional random vectors such that each vector's entries are independently identically distributed (i.i.d)~according to a probability measure with zero mean and finite $4$-th moments. This implies that each $X_{k}$ is distributed with respect to a probabilistic tight frame whose $4$-th moments exist. Thus, the assumptions in Theorem \ref{theorem:final main result} are satisfied, and we can compute \eqref{eq:theorem final} for some specific distributions that are related to compressed sensing:
\begin{itemize}
\item If the entries of $X_{k}$, $k=1,\ldots,M$, are i.i.d.~according to a Bernoulli distribution that takes the values $\pm \frac{1}{\sqrt{N}}$ with probability $\frac{1}{2}$, then $X_{k}$ is distributed according to a normalized counting measure supported on the vertices of the $d$-dimensional hypercube. Thus, $X_{k}$ is distributed according to a probabilistic unit norm tight frame for $\R^N$.
\item If the entries of $X_{k}$, $k=1,\ldots,M$, are i.i.d.~according to a Gaussian distribution with $0$ mean and variance $\frac{1}{\sqrt{N}}$, then $X_{k}$ is distributed according to a multivariate Gaussian probability measure $\mu$ whose covariance matrix is $\frac{1}{N} I_N$, and $\mu$ forms a probabilistic tight frame for $\R^N$. Since the moments of a multivariate Gaussian random vector are well-known, we can explicitly compute $L_4=1+\frac{2}{N}$, $L_1=1$, and $L_2=1$ in Theorem \ref{theorem:final main result}. Thus, the right-hand side of \eqref{eq:theorem final} equals $\frac{1}{M}(1+\frac{1}{N})$. 
\end{itemize}
\end{example}

\begin{acknowledgement}
Martin Ehler was supported by the NIH/DFG Research Career Transition Awards Program (EH 405/1-1/575910).  
K.~A.~Okoudjou was supported by ONR grants: N000140910324 $\&$ N000140910144,  by  a RASA from the Graduate School of UMCP and by the Alexander von Humboldt foundation. He would also like to express its gratitude to the Institute for Mathematics at the University of Osnabrueck. 

\end{acknowledgement}

\end{document}